\newtheorem{theorem}{Theorem}[section]
\newtheorem{lemma}[theorem]{Lemma}
\newtheorem{definition}[theorem]{Definition}
\newtheorem{proposition}[theorem]{Proposition}
\newtheorem{corollary}[theorem]{Corollary}
\begin{document}

\title{Irreducibles and Primes in Computable Integral Domains}

\author[Evron]{Leigh Evron}
\address{Department of Mathematics and Statistics\\
Grinnell College\\
Grinnell, Iowa 50112 U.S.A.}
\email{evronlei@grinnell.edu}

\author[Mileti]{Joseph R. Mileti}
\address{Department of Mathematics and Statistics\\
Grinnell College\\
Grinnell, Iowa 50112 U.S.A.}
\email{miletijo@grinnell.edu}

\author[Ratliff-Crain]{Ethan Ratliff-Crain}
\address{Department of Mathematics and Statistics\\
Grinnell College\\
Grinnell, Iowa 50112 U.S.A.}
\email{ratliffc@grinnell.edu}

\thanks{The authors thank Grinnell College for its generous support through the MAP program for research with undergraduates.}

\begin{abstract}
A computable ring is a ring equipped with mechanical procedure to add and multiply elements.  In most natural computable integral domains, there is a computational procedure to determine if a given element is prime/irreducible.  However, there do exist computable UFDs (in fact, polynomial rings over computable fields) where the set of prime/irreducible elements is not computable.  Outside of the class of UFDs, the notions of irreducible and prime may not coincide.  We demonstrate how different these concepts can be by constructing computable integral domains where the set of irreducible elements is computable while the set of prime elements is not, and vice versa.  Along the way, we will generalize Kronecker's method for computing irreducibles and factorizations in $\mathbb{Z}[x]$.
\end{abstract}

\maketitle

\section{Introduction}

In an integral domain, there are two natural definitions of basic ``atomic" elements:  irreducibles and primes.  We recall these standard algebraic definitions.

\begin{definition}
Let $A$ be an integral domain, i.e.~a commutative ring with $1 \neq 0$ and with no zero divisors (so $ab = 0$ implies either $a = 0$ or $b=0$).  Recall the following definitions.
\begin{enumerate}
\item An element $u \in A$ is a {\em unit} if there exists $w \in A$ with $uw = 1$.  We denote the set of units by $U(A)$.  Notice that $U(A)$ is a multiplicative group.
\item Given $a,b \in A$, we say that $a$ and $b$ are {\em associates} if there exists $u \in U(A)$ with $au = b$.
\item An element $p \in A$ is {\em irreducible} if it nonzero, not a unit, and has the property that whenever $p = ab$, either $a$ is a unit or $b$ is a unit.  An equivalent definition is that $p \in A$ is irreducible if it is nonzero, not a unit, and its divisors are precisely the units and the associates of $p$.
\item An element $p \in A$ is {\em prime} if it nonzero, not a unit, and has the property that whenever $p \mid ab$, either $p \mid a$ or $p \mid b$.
\item $A$ is a {\em unique factorization domain}, or {\em UFD}, if it has the following two properties:
\begin{itemize}
\item For each $a \in A$ such that $a$ is nonzero and not a unit, there exist irreducible elements $r_1,r_2,\dots,r_n \in A$ with $a = r_1r_2 \cdots r_n$.
\item If $r_1,r_2,\dots,r_n,q_1,q_2,\dots,q_m \in A$ are all irreducible and $r_1r_2 \cdots r_n = q_1q_2 \cdots q_m$, then $n = m$ and there exists a permutation $\sigma$ of $\{1,2,\dots,n\}$ such that $r_i$ and $q_{\sigma(i)}$ are associates for all $i$.
\end{itemize}
\end{enumerate}
\end{definition}

It is a simple fact that if $A$ is an integral domain, then every prime element of $A$ is irreducible.  Although the converse is true in any UFD, it does fail for general integral domains.  For example, in the integral domain $\mathbb{Z}[\sqrt{-5}]$, there are two different factorizations of $6$ into irreducibles:
\[
2 \cdot 3 = 6 = (1 + \sqrt{-5})(1 - \sqrt{-5}).
\]
Since $U(\mathbb{Z}(\sqrt{-5})) = \{1,-1\}$, these two factorizations are indeed distinct.  This example also shows that $2$ is an irreducible element that is not prime because $2 \mid (1 + \sqrt{-5})(1 - \sqrt{-5})$ but $2 \nmid 1 + \sqrt{-5}$ and $2 \nmid 1 - \sqrt{-5}$.  In fact, all four of the above irreducible factors are not prime.

For another example that will be particularly relevant for our purposes, let $A$ be the subring of $\mathbb{Q}[x]$ consisting of those polynomials whose constant term and coefficient of $x$ are both integers, i.e.
\[
A = \{a_0 + a_1x + a_2x^2 + \dots + a_nx^n \in \mathbb{Q}[x] : a_0 \in \mathbb{Z} \text{ and } a_1 \in \mathbb{Z}\}.
\]
In this integral domain, all of the normal integer primes are still irreducible (by a simple degree argument), but none of them are prime in $A$ because given any integer prime $p \in \mathbb{Z}$, we have that $p \mid x^2$ since $\frac{x^2}{p} \in A$, but $p \nmid x$ as $\frac{x}{p} \notin A$.

We are interested in the extent to which the irreducible and prime elements can differ in an integral domain.  As just discussed, the set of prime elements is always a subset of the set of irreducible elements, but it may be a proper subset.  Can one of these sets be significantly more complicated than the other?  We approach this question from the point of view of computability theory.  We begin with the following fundamental definition.

\begin{definition}
A {\em computable ring} is a ring whose underlying set is a computable set $A \subseteq \mathbb{N}$, with the property that $+$ and $\cdot$ are computable functions from $A \times A$ to $A$.
\end{definition}

For a general overview of results about computable rings and fields, see \cite{SHTucker}.  Computable fields together with computable factorizations in polynomial rings over those fields have received a great deal of attention (\cite{FrohlichShep}, \cite{MetakidesNerode}, \cite{Rabin}), and \cite{MillerNotices} provides an excellent overview of work in this area.  In particular, there exists a computable field $F$ such that the set of primes in $F[x]$ is not computable (see \cite[Lemma 3.4]{MillerNotices} or \cite[Section 3.2]{SHTucker} for an example).  Moreover, there is a computable UFD such that the set of primes is as complicated as possible in the arithmetical hierarchy (see \cite{JoeDamir}).  For our purposes, we will only need the first level of this hierarchy (see \cite[Chapter 4]{Soare} for more information).

\begin{definition}
Let $Z \subseteq \mathbb{N}$.
\begin{itemize}
\item We say that $Z$ is a $\Sigma_1^0$ set, or {\em computably enumerable}, if there exists a computable $R \subseteq \mathbb{N}^2$ such that
\[
i \in Z \Longleftrightarrow (\exists x) R(x,i).
\]
\item We say that $Z$ is a $\Pi_1^0$ set if there exists a computable $R \subseteq \mathbb{N}^2$ such that
\[
i \in Z \Longleftrightarrow (\forall x) R(x,i).
\]
\end{itemize}
\end{definition}

Notice that the complement of $\Sigma_1^0$ set is a $\Pi_1^0$ set, and the complement of $\Pi_1^0$ set is a $\Sigma_1^0$ set.  Although every computable set is both a $\Sigma_1^0$ set and $\Pi_1^0$ set, there exists a $\Sigma_1^0$ set that is not computable, such as the set of natural numbers coding programs that halt.  The complement of a noncomputable $\Sigma_1^0$ set is a noncomputable $\Pi_1^0$ set.  We will use the following standard fact (see \cite[Section II.1]{Soare})

\begin{proposition} \label{p:Sigma1IffRangeComputable}
An infinite set $Z \subseteq \mathbb{N}$ is $\Sigma_1^0$ if and only if there exists a computable injective function $\alpha \colon \mathbb{N} \to \mathbb{N}$ such that $\text{range}(\alpha) = Z$.
\end{proposition}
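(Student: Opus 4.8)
The plan is to prove the two directions of the biconditional separately, with the forward direction carrying essentially all of the content.

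For the backward direction, I would assume that $\alpha \colon \mathbb{N} \to \mathbb{N}$ is a computable function with $\text{range}(\alpha) = Z$ and simply read off a $\Sigma_1^0$ definition. Define the relation $R(x,i)$ to hold exactly when $\alpha(x) = i$; this $R$ is computable because $\alpha$ is. Then $i \in Z$ if and only if $i$ lies in the range of $\alpha$, i.e.\ if and only if $(\exists x)\,R(x,i)$, so $Z$ is $\Sigma_1^0$. Note that neither injectivity nor infiniteness is needed here.

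For the forward direction, I would assume $Z$ is infinite and $\Sigma_1^0$, fix a computable $R$ with $i \in Z \Longleftrightarrow (\exists x)\,R(x,i)$, and build $\alpha$ in stages so as to enumerate $Z$ without repetitions. Fix a computable enumeration $(x_0,i_0),(x_1,i_1),\dots$ of all pairs in $\mathbb{N}^2$ via a standard pairing function. At stage $n$, having already defined $\alpha(0),\dots,\alpha(n-1)$, I would search through the pairs in order until I find the first $(x,i)$ such that $R(x,i)$ holds and $i \notin \{\alpha(0),\dots,\alpha(n-1)\}$, and then set $\alpha(n) = i$. Injectivity of $\alpha$ is then immediate from the construction, since each output is required to differ from all previous ones. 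To check $\text{range}(\alpha) = Z$, I would observe that every output $\alpha(n)$ satisfies $R(x,\alpha(n))$ for some $x$ and hence lies in $Z$, while conversely any $i \in Z$ must eventually be emitted: some pair $(x,i)$ with $R(x,i)$ appears in the enumeration, and if $i$ were never output, the search would eventually reach that pair and emit it.

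The main obstacle is totality of $\alpha$, i.e.\ showing that the search at each stage actually terminates, and this is the one place where infiniteness is essential. At stage $n$ exactly $n$ distinct elements of $Z$ have been enumerated; since $Z$ is infinite, there remains some $j \in Z$ not yet output, and for that $j$ there is an $x$ with $R(x,j)$, so the pair $(x,j)$ is a valid target that the search will reach after finitely many steps. Hence $\alpha(n)$ is defined for every $n$. If $Z$ were finite instead, the search could run forever once $Z$ had been exhausted, which is precisely why the hypothesis that $Z$ is infinite cannot be dropped.
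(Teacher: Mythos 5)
Your proof is correct. Note that the paper itself gives no proof of this proposition---it is quoted as a standard fact with a citation to Soare's book---so there is no in-paper argument to compare against; what you wrote is exactly the standard textbook proof (read off the $\Sigma_1^0$ definition from $\alpha$ in one direction; in the other, dovetail through pairs to enumerate $Z$ without repetition, with infiniteness of $Z$ guaranteeing that each stage's search terminates). The only step you state loosely is that every $i \in Z$ is ``eventually emitted'': to make this airtight, observe that the position at which the stage-$n$ search halts is strictly increasing in $n$ (all positions before the halting position are permanently invalid, and the halting position itself becomes invalid once its second coordinate is output), so if a pair $(x,i)$ with $R(x,i)$ sat at position $m$ and remained valid forever, every search would halt at a position at most $m$, contradicting strict monotonicity after $m+1$ stages.
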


We will prove that there exists a computable integral domain where the set of irreducible elements is computable while the set of prime elements is not, and also there exists a computable integral domain where the set of prime elements is computable while the set of irreducible elements is not.  Thus, these two notions can be wildly different.  Our approach will be to code an arbitrary $\Pi_1^0$ set into the set of irreducible (resp.~prime) elements while maintaining control over the set of prime (reps.~irreducible) elements.  Moreover, our integral domains will extend $\mathbb{Z}$ and we will perform our noncomputable coding into the normal integer primes as in \cite{JoeDamir}.

\section{Strongly Computable Finite Factorization Domains}

In Section 3, we will build a computable integral domain $A$ such that the set of irreducible elements of $A$ is computable but the set of prime elements of $A$ is not computable.  The idea is that we will turn off the primeness of a normal integer prime $p_i$ in response to a $\Sigma_1^0$ event (such as program $i$ halting) by introducing a new element $x$ with $p_i \mid x^2$ but $p_i \nmid x$.  In doing this, we will expand $A$ and we will want to ensure that we can compute the irreducible elements in the resulting integral domain.  Since we are adding a new element, this construction will be analogous to expanding our original $A$ to the polynomial ring $A[x]$.  However, there is a potential problem here in that even if the irreducible elements of an integral domain $A$ are computable, it need not be the case the the irreducible elements of $A[x]$ are computable.  In fact, as mentioned in the introduction, there are computable fields $F$ (where the irreducibles are trivially computable because no element is irreducible) such that the irreducibles of $F[x]$ are not computable.

To remedy this situation, we will ensure that the integral domains in our construction have a stronger property.  As motivation, we first summarize Kronecker's method for finding the divisors of an element $\mathbb{Z}[x]$, and hence for determining whether an element is irreducible.  Let $f(x) \in \mathbb{Z}[x]$ be nonzero, and let $n = \deg(f(x))$.  We try to restrict the set of possible divisors to a finite set that we need to check.  Since the degree function is additive, notice that any divisor of $f(x)$ has degree at most $n$.  Now perform the following:
\begin{itemize}
\item Notice that if $g(x) \in \mathbb{Z}[x]$ and $g(x) \mid f(x)$ in $\mathbb{Z}[x]$, then $g(a) \mid f(a)$ for all $a \in \mathbb{Z}$.
\item Find $n+1$ many points $a \in \mathbb{Z}$ with $f(a) \neq 0$ (which exist because $f(x)$ has at most $n$ roots).  Notice that each such $f(a)$ has only finitely many divisors in $\mathbb{Z}$.
\item For each of the possible choices of the divisors of these values in $\mathbb{Z}$, find the unique interpolating polynomial in $\mathbb{Q}[x]$ of degree at most $n$.
\item Check if any of these polynomials are in $\mathbb{Z}[x]$, and if so, check if they divide $f(x)$ in $\mathbb{Z}[x]$.
\item Compile the resulting list of divisors.
\end{itemize}
Therefore, we can compute the finite set of divisors of any element of $\mathbb{Z}[x]$.  Since we know the units of $\mathbb{Z}[x]$, it follows that we can computably determine if an element of $\mathbb{Z}[x]$ is irreducible.

The key algebraic fact that makes Kronecker's method work is that every nonzero element of $\mathbb{Z}$ has only finitely many divisors.  Integral domains with this property were defined and studied in \cite{AAZ-Factor1,AAZ-Factor2,AndersonMullins}.

\begin{definition}
Let $A$ be an integral domain.
\begin{itemize}
\item $A$ is a {\em finite factorization domain}, or FFD, if every nonzero element has only finitely many divisors up to associates.
\item $A$ is a {\em strong finite factorization domain} if every nonzero element has only finitely many divisors.
\end{itemize}
\end{definition}

We now define an effective analogue of strong finite factorization domains.  In addition to wanting our ring to be computable, we also want the stronger property that we can compute the finite set of divisors of any nonzero element.  Instead of using the word ``strong" twice, we adopt the following definition.

\begin{definition}
A {\em strongly computable finite factorization domain}, or SCFFD, is a computable integral domain $A$ equipped with a computable function $D$ such that for all $a \in A \backslash \{0\}$, we have that $D(a)$ is (a canonical index for) the finite set of divisors of $a$ in $A$.
\end{definition}

\begin{proposition}
Let $A$ be an SCFFD equipped with divisor function $D$.
\begin{enumerate}
\item The set $U(A)$ is a finite set that can be computed from $A$.
\item The set of irreducible elements of $A$ is computable.
\end{enumerate}
\end{proposition}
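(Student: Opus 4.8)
The plan is to exploit the fact that the divisor function $D$ gives complete computable access to divisor sets, together with the observation that both units and associates can themselves be described in terms of divisibility.

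For part (1), I would note that an element $u \in A$ is a unit precisely when $u \mid 1$, i.e.\ when there is $w \in A$ with $uw = 1$; this is exactly the definition of divisibility applied to the element $1$. Since $A$ is an integral domain we have $1 \neq 0$, so $D(1)$ is defined and is a canonical index for the finite set of divisors of $1$. Hence $U(A) = D(1)$ as sets, which immediately shows $U(A)$ is finite; and since $D$ is computable and a canonical index lets us computably list the elements of, and test membership in, the finite set it codes, $U(A)$ can be computed from $A$ and $D$.

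For part (2), I would use the equivalent formulation of irreducibility stated in the definition: $p$ is irreducible iff it is nonzero, not a unit, and its divisors are precisely the units and the associates of $p$. Given $p$, first decide whether $p \neq 0$ (computable since the underlying set of $A$ is computable) and whether $p \notin U(A)$ (decidable by part (1), as $U(A)$ is a finite computable set). Assuming both hold, I would compute the finite set $D(p)$ of divisors of $p$ together with the finite set of associates $\{pu : u \in U(A)\}$, which is computable because $U(A)$ is finite and multiplication in $A$ is computable. Since every unit and every associate of $p$ automatically divides $p$, we always have $U(A) \cup \{pu : u \in U(A)\} \subseteq D(p)$, so $p$ is irreducible exactly when the reverse inclusion holds as well, i.e.\ when $D(p) = U(A) \cup \{pu : u \in U(A)\}$. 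This is an equality of finite computable sets, hence decidable, so the set of irreducibles is computable.

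I do not anticipate a genuine obstacle: the whole argument reduces irreducibility to a finite check once $D$ is available, and the only real insight is the identification $U(A) = D(1)$. The one point requiring mild care is the handling of canonical indices, where I will rely on the standard fact that from a canonical index for a finite subset of $\mathbb{N}$ one can computably recover the set, decide membership, and compute its image under a computable function. An alternative route for part (2) sidesteps associates entirely: for each non-unit $d \in D(p)$, locate its cofactor $d'$ by searching $D(p)$ for the unique element with $dd' = p$, and declare $p$ reducible iff some such $d'$ is also a non-unit; this matches the factorization form of the definition directly, but the associate-based version above is a little cleaner to state.
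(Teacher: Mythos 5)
Your proposal is correct and follows essentially the same route as the paper: identify $U(A) = D(1)$ for part (1), then for part (2) reduce irreducibility of a nonzero non-unit $p$ to the finite, decidable check that every element of $D(p)$ is a unit or an associate of $p$ (your set-equality test $D(p) = U(A) \cup \{pu : u \in U(A)\}$ is just a repackaging of the paper's element-by-element check). The only cosmetic difference is that the paper scans each $b \in D(p)$ individually rather than forming the associate set up front; both are the same finite computation.
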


\begin{proof}
For the first claim, simply notice that $U(A) = D(1)$.  For the second, given any $a \in A$, we have that $a$ is irreducible if and only it nonzero, not a unit, and its only divisors are units and associates.  Suppose then that we are given an arbitrary $a \in A$.  We can check whether $a$ is zero or a unit (by part 1), and if either is true, then $a$ is not irreducible.  Otherwise, then since $a \neq 0$, we can compute the finite set $D(a)$ of divisors of $a$.  Since we can also compute the finite set $U(A)$, we can examine each $b \in D(a)$ in turn to determine whether $b \in U(A)$ or whether there exists $u \in U(A)$ with $b = au$.  If this is true for all $b \in D(a)$, then $a$ is irreducible in $A$, and otherwise it is not.
\end{proof}

If we include an additional assumption that $A$ is a UFD, then we have a converse to the previous result.

\begin{proposition}
Let $A$ be a computable integral domain with the following properties:
\begin{itemize}
\item $A$ is a UFD.
\item $U(A)$ is finite.
\item The set of irreducible elements of $A$ is computable.
\end{itemize}
We can then equip $A$ with a computable function $D$ so that $A$ becomes an SCFFD.
\end{proposition}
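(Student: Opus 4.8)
The plan is to reduce everything to computing, for a given nonzero $a \in A$, a complete factorization of $a$ into irreducibles; once we have such a factorization, unique factorization lets us read off all divisors. Since $U(A)$ is finite, it is computable (its finite list of elements can be hard-coded into the procedure), so we can decide unit-hood. In particular, if $a \in U(A)$ then its divisors are exactly the units, and we may return $D(a) = U(A)$ directly.

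For the remaining case I would first describe an algorithm that factors an arbitrary nonzero non-unit $a$ into irreducibles. Test whether $a$ is irreducible, which is possible by hypothesis. If so, the factorization is just $a$ itself. If not, then by the negation of the definition of irreducible there exist $b, c \in A$, neither a unit, with $a = bc$. We can therefore search through all pairs $(b,c) \in A \times A$ (enumerable since $A \subseteq \mathbb{N}$ is computable), halting at the first pair with $bc = a$ and $b, c \notin U(A)$; this search is guaranteed to terminate precisely because such a pair exists. We then recurse on $b$ and on $c$, and concatenate the irreducibles produced to obtain a factorization $a = r_1 r_2 \cdots r_n$.

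To see that the recursion terminates, I would use that $A$ is a UFD: each nonzero non-unit has a well-defined length $\Omega(a)$, the number of irreducible factors (with multiplicity) in any factorization, which is well-defined by uniqueness. Whenever $a = bc$ with $b, c \notin U(A)$, concatenating factorizations of $b$ and $c$ gives $\Omega(a) = \Omega(b) + \Omega(c)$ with $\Omega(b), \Omega(c) \geq 1$, so both strictly decrease the length and the recursion must bottom out at irreducibles. Given the computed factorization $a = r_1 \cdots r_n$, unique factorization shows that $d \mid a$ if and only if $d$ is an associate of $\prod_{i \in S} r_i$ for some $S \subseteq \{1, \dots, n\}$. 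Hence the complete (finite) set of divisors is $\{u \cdot \prod_{i \in S} r_i : S \subseteq \{1, \dots, n\}, \ u \in U(A)\}$, which we compute by evaluating all at most $2^n \cdot |U(A)|$ products, discarding duplicates (equality in $A$ is decidable since $A$ is a computable set), and outputting a canonical index for the resulting set; this defines $D(a)$. Note that when $a$ is irreducible this specializes to the units ($S = \emptyset$) together with the associates of $a$ ($S = \{1\}$), as expected.

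The main obstacle is justifying that the unbounded search for a nontrivial factorization always halts and that the overall recursion terminates: both rely on combining the computability of the set of irreducibles (to recognize when no further splitting is needed) with the UFD hypothesis (to guarantee that a nontrivial factorization exists and to bound its length via $\Omega$). The remaining step, verifying that the listed products are exactly the divisors of $a$, is a routine consequence of unique factorization.
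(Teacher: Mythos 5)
Your proof is correct and follows essentially the same route as the paper: recursively split non-irreducible elements via unbounded search (which halts because a nontrivial factorization into nonunits is guaranteed to exist), then read off the finite divisor set from the resulting irreducible factorization using uniqueness. The only notable difference is your termination argument via the factorization-length function $\Omega$, which is a clean and equally valid substitute for the paper's appeal to K\"{o}nig's Lemma together with the nonexistence of infinite descending chains of strict divisibilities in a UFD; likewise, your enumeration of divisors as unit multiples of subset-products $u\prod_{i \in S} r_i$ (with duplicates discarded) matches the paper's enumeration via exponent vectors on associate-classes of irreducibles.
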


\begin{proof}
We first argue that we can computably factor elements of $A$ into irreducibles.  Let $a \in A$ be nonzero and not a unit.  Since the set of irreducibles of $A$ is computable, we can check whether $a$ is irreducible.  If not, we search until we find two nonzero nonunit elements of $A$ whose product is $a$.  We can now check if these factors are irreducible, and if not we can repeat to factor them.  Notice that this process must eventually produce finitely many irreducibles whose product is $a$ by K\"{o}nig's Lemma together with the fact that there are no infinite descending chains of strict divisibilities in a UFD.

We now define our function $D$.  Let $a \in A \backslash \{0\}$ be arbitrary.  Check if $a \in U(A)$ (which is possible because $U(A)$ is finite and computable from $A$), and if so, define $D(a)$ to equal $U(A)$.  If $a \notin U(A)$, then we we can computably factor it into irreducibles $q_i$ so that $a = q_1q_2 \dots q_n$.  Since $U(A)$ is finite, we can now computably check if any of the $q_i$ are associates of each other, and if so we can find witnessing units.  Thus, we can write $a = up_1^{k_1} \cdots p_m^{k_m}$ where $w \in U(A)$, each $p_i$ is irreducible, each $k_i \in \mathbb{N}^+$, and $p_i$ and $p_j$ are not associates whenever $i \neq j$.  Since $A$ is a UFD, we then have that the set of divisors of $a$ equals the set of elements of the form $wp_1^{\ell_1} \cdots p_m^{\ell_m}$ where $w \in U(A)$ and $0 \leq \ell_i \leq k_i$ for all $i$.  Thus, we can define $D(a)$ to be this finite set.
\end{proof}

In contrast, there are SCFFDs that are not UFDs, such as $\mathbb{Z}[\sqrt{-5}]$.  More generally, the ring of integers in any imaginary quadratic number field is an SCFFD.  To see this, Let $K$ be an imaginary quadratic number field, and fix an integral basis of $\mathcal{O}_K$.  Using this integral basis, we can view $\mathcal{O}_K$ as a computable integral domain in such a way that the norm function and divisibility relation are both computable on $\mathcal{O}_K$ (see \cite[Proposition 1.4]{JoeDamir}).  Given any $n \in \mathbb{N}$, there are only finitely many elements of norm $n$, and moreover we can compute the finite set of such elements.  Now given any nonzero $a \in A$, we can compute $N(a)$, examine all elements of norm dividing $N(a)$, and check which of them divide $a$ (since the divisibility relation is computable) to compute the set of divisors of $a$.

Let $A$ be a computable integral domain and let $F$ be the field of fractions of $A$.  Recall that elements of $F$ are equivalence classes of pairs of elements of $A$.  If we were to allow multiple representations of elements, we can of course work with pairs of elements of $A$ and define addition and multiplication on these elements computably.  Nonetheless, a computable ring is defined in a way that forbids such multiple representations, so it is not immediately obvious that we can view $F$ as a computable field.  However, since a computable integral domain is coded as a subset of $\mathbb{N}$, we can view pairs of elements $(a,b) \in A^2$ with $b \neq 0$ as being coded by elements of $\mathbb{N}^2$, which in turn can be coded by elements of $\mathbb{N}$.  Thus, we can view the field of fractions $F$ as a computable field by working only with pairs $(a,b)$ such that there is no strictly smaller pair $(c,d)$ in the usual ordering of $\mathbb{N}$ with $ad = bc$.  In this way, we can still define addition and multiplication computably be searching back for the smallest equivalent representative.

In general, for a computable integral domain $A$, it may not be possible to build the field of fractions as a computable extension of $A$, because it may not be possible to determine when an element $\frac{a}{b} \in F$ is actually an element of $A$.  The issue is that we may not be able to determine if $b \mid a$ because the divisibility relation may not be computable.  However, we have the following.

\begin{corollary}
If $A$ is an SCFFD, then the field of fractions of $A$ is a computable field, and we can computably build it as an extension of $A$.
\end{corollary}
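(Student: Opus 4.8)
The plan is to build on the discussion immediately preceding the statement, which already shows that the field of fractions $F$ can be presented as a computable field by coding each element via its canonical (least) representative pair $(a,b)$ with $b \neq 0$. As noted there, the sole remaining obstruction to realizing $F$ as an extension of $A$ is to decide, for such a representative, whether $\frac{a}{b}$ already lies in $A$, equivalently whether $b \mid a$. The key point is that the divisor function $D$ makes exactly this decision computable.

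First I would verify that in an SCFFD the divisibility relation is computable and that quotients can be recovered. Given $a,b \in A$ with $b \neq 0$, if $a = 0$ then $b \mid a$ and $\frac{a}{b} = 0$. If $a \neq 0$, then $b \mid a$ if and only if $b \in D(a)$, and since $D(a)$ is a finite set given by a canonical index, this is a finite, decidable check. Moreover, whenever $b \mid a$, the element $c$ with $bc = a$ is unique (because $A$ is a domain) and itself divides $a$, so $c \in D(a)$; hence I can recover $c$ by searching the finite set $D(a)$ for the unique element satisfying $bc = a$.

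With computable divisibility in hand, the map $\iota \colon A \to F$ sending $a$ to the class of $\frac{a}{1}$ is a computable injection, and its range is a computable subset of $F$: given the canonical representative $(c,d)$ of an element of $F$, that element lies in $\iota(A)$ precisely when $d \mid c$, which is now decidable, and in that case $\iota^{-1}$ returns the computable quotient $\frac{c}{d} \in A$. Thus I can computably test membership of $A$ inside $F$ and translate back and forth.

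Finally I would relabel codes so that each element of $\iota(A)$ is replaced by the corresponding element of $A$, leaving the remaining classes of $F$ coded by fresh natural numbers disjoint from $A$. Because $A$ and its complement in $\mathbb{N}$ are computable, this relabeling is a computable bijection, and transporting $+$ and $\cdot$ along it yields a computable field whose underlying set contains $A$ as a computable subset and whose operations restrict on $A$ to those of $A$ — that is, the field of fractions built as a computable extension of $A$. The substantive content is the single observation in the second step, that $D$ renders divisibility, and hence membership of $A$ inside $F$, decidable; the remaining work is the standard reduction already carried out above. The only place requiring genuine care is this last coding step: if $\mathbb{N} \setminus A$ happens to be finite, I would first pass to an isomorphic computable copy of $A$ supported on a co-infinite computable set, so that there is room to assign distinct codes to the genuinely new elements of $F$.
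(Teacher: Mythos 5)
Your proposal is correct and takes essentially the same route as the paper: the key step in both is that $D$ makes the relation $b \mid a$ decidable, hence makes membership of a (canonically represented) fraction in $A$ decidable, so one can keep the existing codes of $A$ for elements of $A$ and assign fresh codes to the minimal pairs $(a,b)$ with $b \nmid a$. Your closing caveat about relabeling when $\mathbb{N} \setminus A$ is finite is a careful treatment of a coding edge case that the paper's (terse, in fact trailing-off) proof silently glosses over, but it is a refinement of the same argument rather than a different approach.
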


\begin{proof}
Notice that in the field of fractions of $A$, we have that $\frac{a}{b} \in A$ if and only if $b \mid a$, which is if and only if $b \in D(a)$.  Now since $A$ is a computable integral domain, it is coded as a subset of $\mathbb{N}$.  We can now add on minimal pairs $(a,b)$ such that $b \nmid a$.  With this, we can define addition and multiplication
\end{proof}

In fact, we can computably ``reduce" fractions over an SCFFD to lowest terms, as we now show.

\begin{proposition} \label{p:ReduceFractionsOverSCFFD}
Let $A$ be an SCFFD and let $F$ be the field of fractions of $A$.  Given an arbitrary pair of elements $a,b \in R$ with $b \neq 0$, we can computably find a pair of elements $c,d \in R$ with $d \neq 0$, with $\frac{c}{d} = \frac{a}{b}$ in $F$, and such that the only common divisors of $c$ and $d$ are the units of $A$.
\end{proposition}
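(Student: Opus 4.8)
The plan is to reduce $\frac{a}{b}$ to lowest terms by repeatedly cancelling non-unit common divisors, using the divisor function $D$ to detect when such a cancellation is still possible. First I would dispose of the degenerate case: if $a = 0$, simply output $(c,d) = (0,1)$, since every element of $A$ divides $0$, so the common divisors of $0$ and $1$ are exactly the divisors of $1$, namely the units. For the remaining case I assume $a \neq 0$, so that both $a$ and $b$ are nonzero and the finite sets $D(a)$ and $D(b)$ are both defined.

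The core of the algorithm is an iterative loop operating on a current pair $(a,b)$ of nonzero elements whose ratio $\frac{a}{b}$ in $F$ always equals that of the original input. At each stage I would compute the finite sets $D(a)$ and $D(b)$, together with the finite set $U(A) = D(1)$ of units (available from the earlier proposition). Since equality in $A$ is decidable (as $A$ is a computable subset of $\mathbb{N}$), I can compute the finite intersection $D(a) \cap D(b)$ of common divisors and test whether it contains a non-unit $g$. If it does not, then the only common divisors of $a$ and $b$ are units, and I output $(c,d) = (a,b)$. If it does, I pick such a $g$, compute the quotients $a/g$ and $b/g$ by searching $A$ for the unique elements $h,h'$ with $gh = a$ and $gh' = b$ (these exist and are unique because $g$ divides both and $A$ is an integral domain), replace $(a,b)$ by $(a/g,\, b/g)$, and repeat. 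Each replacement preserves the value of the fraction and keeps both entries nonzero, so the output, once produced, is a valid reduced representative.

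The step requiring the most care is termination, and this is exactly where the finite-factorization hypothesis is essential. I would use $\lvert D(b) \rvert$ as a strictly decreasing measure of progress. The point is that when $g$ is a non-unit common divisor, $b/g$ is a proper divisor of $b$: every divisor of $b/g$ also divides $b$, since $b/g \mid b$, so $D(b/g) \subseteq D(b)$; yet $b \in D(b)$ while $b \notin D(b/g)$, for if $b \mid (b/g)$ then $b/g = bk$ for some $k$, whence $b = g \cdot (b/g) = gbk$ forces $gk = 1$ and makes $g$ a unit, a contradiction. Hence $D(b/g) \subsetneq D(b)$, so $\lvert D(b) \rvert$ drops by at least one at each iteration. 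Since $\lvert D(b) \rvert \geq \lvert U(A) \rvert \geq 1$ is bounded below, the loop halts after finitely many steps.

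The main obstacle to guard against is the temptation to shortcut the argument by dividing out a greatest common divisor in one stroke. That is not available in general, since $A$ need not be a GCD domain (indeed $\mathbb{Z}[\sqrt{-5}]$, our guiding example, is not), so no single canonical common factor need exist. The iterative cancellation sidesteps this entirely: I never need a gcd, only the ability to detect and remove \emph{some} one non-unit common factor, and the finiteness of the divisor sets guarantees that this cannot go on forever.
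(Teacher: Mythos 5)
Your proof is correct, but it takes a genuinely different route from the paper's. The paper performs the reduction in a \emph{single} step: it forms the finite set $S = D(a) \cap D(b)$ of common divisors, fixes an element $r \in S$ maximizing the count $\lvert \{ s \in S : s \mid r \} \rvert$, divides $a$ and $b$ by that one element, and then proves the quotients are reduced by a maximality argument: if $t$ were a common divisor of the quotients, then $rt$ would lie in $S$, and since $\{ s \in S : s \mid r \} \subseteq \{ s \in S : s \mid rt \}$ while maximality forces the cardinalities to agree, the two sets coincide; as $rt$ belongs to the latter, $rt \mid r$, which makes $t$ a unit. You instead run a greedy loop, cancelling one non-unit common factor at a time, with termination guaranteed by the strictly decreasing measure $\lvert D(b) \rvert$; your verification that $D(b/g) \subsetneq D(b)$ is exactly right, with $b$ itself witnessing the proper containment, and your observation that the quotient search terminates and is unique (since $g \neq 0$ in an integral domain) closes the computability side. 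Both arguments rest on the same finiteness hypothesis, and both correctly avoid any appeal to GCDs, which as you note need not exist in an SCFFD such as $\mathbb{Z}[\sqrt{-5}]$. What the paper's method buys is a one-pass computation with no iteration and a single division; what yours buys is a more transparent correctness proof, since the loop requires no clever choice of which common divisor to cancel and the termination measure is immediate.
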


\begin{proof}
First notice that if $a = 0$, then we may take $c = 0$ and $d = 1$.  Suppose then that $a \neq 0$.  Since we also have that $b \neq 0$, we can now computably determine the finite set of divisors of each of $a$ and $b$, and thus can computably build the finite set $S$ of common divisors of $a$ and $b$, i.e.~$S = D(a) \cap D(b)$.  For each $r \in S$, we can computably determine the number $|\{s \in S : s \mid r\}| = |D(r) \cap S|$.  Fix an $r \in S$ such that $|\{s \in S : s \mid r\}|$ is as large as possible.  Since $r$ is a common divisor of $a$ and $b$, we can computably search for $c,d \in A$ such that $rc = a$ and $rd = b$.  Notice that $d \neq 0$ (because $b \neq 0$) and $\frac{a}{b}= \frac{c}{d}$.  Suppose now that $t$ is a common divisor of $c$ and $d$.  We then have that $rt$ is a common divisor of $a$ and $b$, so $rt \in S$.  By definition of $R$, this implies that $|\{s \in S : s \mid rt\}| \leq |\{s \in S : s \mid r\}|$.  Since $\{s \in S : s \mid r\} \subseteq \{s \in S : s \mid rt\}$, it follows that $\{s \in S : s \mid r\} \subseteq \{s \in S : s \mid rt\}$.  Thus ,$|\{s \in S : s \mid rt\}| = |\{s \in S : s \mid r\}|$.  In particular, we must have $rt \mid r$, so $t \in U(A)$.
\end{proof}

Notice this reduction need not be unique, even up to units.  In the SCFFD $\mathbb{Z}[\sqrt{-5}]$ we have that
\[
\frac{2}{1+\sqrt{-5}} = \frac{1-\sqrt{-5}}{3}
\]
where there are no nonunit common factors for the numerator and denominator of either side.

By \cite[Proposition 5.3]{AAZ-Factor1} and \cite[Theorem 5]{AndersonMullins}, if $A$ is a (strong) finite factorization domain, then so is $A[x]$.  We now prove an effective analogue of this result.  Notice first that if $A$ is a finite integral domain, then $A$ is a finite field, and $A[x]$ is trivially an SCFFD because given $f(x) \in A[x] \backslash \{0\}$, every divisor $g(x)$ of $f(x)$ must satisfy $\deg(g(x)) \leq \deg(f(x))$, and so we need only check each of the finitely many possibilities (which is possible because we can computably search for quotients and remainders).  We now handle the infinite case.

\begin{theorem} \label{t:PolynomialRingOverSCFFDisSCFFD}
If $A$ is an infinite SCFFD, then so is $A[x]$.  Moreover, given an index for a function $D$ witnessing that $A$ is an SCFFD, we can computably obtain an index for a function $D'$ extending $D$ to witness the fact that $A[x]$ is an SCFFD.
\end{theorem}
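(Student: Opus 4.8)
The plan is to adapt Kronecker's method, described above for $\mathbb{Z}[x]$, to an arbitrary infinite SCFFD $A$ with divisor function $D$. Given a nonzero $f(x) \in A[x]$ of degree $n$, I want to computably produce the finite set of all divisors of $f(x)$ in $A[x]$ and declare this set to be $D'(f(x))$. Two facts drive the argument. First, since $A[x]$ is an integral domain the degree is additive, so every divisor $g(x)$ of $f(x)$ satisfies $\deg(g(x)) \leq n$. Second, by the Corollary established above the field of fractions $F$ of $A$ is a computable field that we can build as an extension of $A$, and inside $F$ we may perform Lagrange interpolation and polynomial division.

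First I would computably locate $n+1$ distinct elements $a_0, a_1, \dots, a_n \in A$ with $f(a_i) \neq 0$ for every $i$. This is precisely where infiniteness of $A$ is used: a nonzero polynomial of degree $n$ over an integral domain has at most $n$ roots, so searching through the infinitely many elements of $A$ and discarding roots is guaranteed to produce enough points, and the search terminates. For each $i$, I evaluate $f(a_i) \in A \setminus \{0\}$ and apply the divisor function of $A$ to compute the finite set $D(f(a_i))$. The key observation is that if $g(x) \mid f(x)$ in $A[x]$, then $g(a_i) \mid f(a_i)$ in $A$, so $g(a_i) \in D(f(a_i))$ for each $i$.

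Next I would range over the finitely many tuples $(d_0, d_1, \dots, d_n)$ with $d_i \in D(f(a_i))$. For each such tuple, distinctness of the $a_i$ guarantees a unique interpolating polynomial of degree at most $n$ in $F[x]$ through the points $(a_i, d_i)$, which I compute by Lagrange interpolation in the computable field $F$. I then test whether all of its coefficients lie in $A$ (membership in $A$ is decidable, since $\frac{p}{q} \in A$ exactly when $q \in D(p)$), and if so whether the resulting element of $A[x]$ divides $f(x)$. The latter is checked by dividing $f(x)$ by the candidate in $F[x]$, which is possible because the candidate's leading coefficient is a unit of $F$, and then asking whether the remainder is zero and the quotient lies in $A[x]$; by uniqueness of division in $F[x]$ this correctly decides divisibility in $A[x]$. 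Collecting every candidate that passes both tests yields a finite list, which I take to be $D'(f(x))$.

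Completeness is the point to verify carefully, but it is immediate: any genuine divisor $g(x)$ has degree at most $n$ and values $g(a_i) \in D(f(a_i))$, so it arises as the interpolant of the tuple $(g(a_0), \dots, g(a_n))$ and survives both tests, while everything we output is explicitly checked to divide $f(x)$. Hence the list is exactly the divisor set, which simultaneously reconfirms finiteness. To see that $D'$ extends $D$, note that when $f(x)$ is a nonzero constant $a \in A$ we have $n = 0$, a single evaluation point, and interpolation produces only the constants in $D(a)$; since a degree argument shows every divisor of a constant in $A[x]$ is itself a constant, we get $D'(a) = D(a)$. I expect the main obstacle to be bookkeeping the computability of the passage through $F$ — producing the interpolants, deciding membership in $A$, and deciding divisibility in $A[x]$ — rather than any single deep step; the genuinely essential hypothesis is infiniteness, without which the interpolation points need not exist.
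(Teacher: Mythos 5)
Your proposal is correct and follows essentially the same route as the paper: Kronecker's method with $n+1$ evaluation points (guaranteed by infiniteness of $A$), the divisor sets $D(f(a_i))$, Lagrange interpolation over the computable field of fractions, deciding membership in $A[x]$ coefficientwise via $D$, and confirming divisibility by division in $F[x]$ with remainder and quotient tests. The paper packages the last two steps as separate lemmas, and your explicit verification that $D'(a)=D(a)$ on constants is a detail the paper leaves implicit, but the argument is the same.
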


Before jumping into the proof, we give two lemmas.

\begin{lemma} \label{l:InterpoteAndCheckIfInA}
Let $A$ be an SCFFD, let $n \in \mathbb{N}^+$, let $a_0, a_1, \dots, a_n \in A$ be distinct and let $b_0, b_1, \dots, b_n \in R$.  Let $F$ be the field of fractions of $A$.  There is exactly one polynomial $p(x) \in F[x]$ of degree at most $n$ with $p(a_i) = b_i$ for all $i$.  Furthermore, we can computably construct $p(x)$ in $F[x]$, and can computably determine if $p(x) \in A[x]$. \\
\end{lemma}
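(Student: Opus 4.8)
The plan is to handle the purely algebraic claim first and then layer the computability on top. For existence and uniqueness of $p(x)$, I would appeal to the standard theory of polynomial interpolation over a field. Existence comes from the explicit Lagrange formula
\[
p(x) = \sum_{i=0}^{n} b_i \prod_{j \neq i} \frac{x - a_j}{a_i - a_j},
\]
which makes sense in $F[x]$ precisely because the $a_i$ are distinct, so each factor $a_i - a_j$ is a nonzero element of $F$ and hence invertible. One checks directly that $p(a_i) = b_i$ for all $i$ and that $\deg(p(x)) \leq n$. For uniqueness, if $q(x) \in F[x]$ also has degree at most $n$ and agrees with $p(x)$ at the $n+1$ distinct points $a_0, \dots, a_n$, then $p(x) - q(x)$ is a polynomial of degree at most $n$ over the field $F$ with at least $n+1$ roots, and so it must be the zero polynomial.

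Next I would make the construction effective. By the Corollary above, since $A$ is an SCFFD, its field of fractions $F$ is a computable field built as a computable extension of $A$; in particular, addition, multiplication, and inversion of nonzero elements in $F$ are all computable, and each $a_i$ and each $b_i$ is available as an element of $F$. The Lagrange expression for $p(x)$ is a fixed finite combination of the $a_i$ and $b_i$ using these field operations, with all inversions applied to the nonzero elements $a_i - a_j$. Carrying out these finitely many computable operations yields each coefficient of $p(x)$ as an explicit element of $F$, so we can computably produce $p(x) \in F[x]$.

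Finally, to decide whether $p(x) \in A[x]$, I would check its finitely many coefficients one at a time, since $p(x) \in A[x]$ if and only if every coefficient lies in $A$. Each coefficient is an element of $F$ of the form $\frac{c}{d}$ with $c,d \in A$ and $d \neq 0$, and $\frac{c}{d} \in A$ exactly when $d \mid c$ in $A$. This divisibility is decidable in an SCFFD: if $c = 0$ then the coefficient is $0 \in A$, while if $c \neq 0$ we compute the finite set $D(c)$ of divisors of $c$ and test whether $d \in D(c)$. Running this test on each coefficient decides membership of $p(x)$ in $A[x]$.

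I do not expect a genuine obstacle here; the content is really just assembling the classical interpolation argument together with the effectivity already packaged in the SCFFD hypothesis. The one point that requires care is ensuring that the field operations used in the Lagrange formula, and especially the inversions producing the interpolation coefficients, are legitimately computable, which is exactly what the Corollary guarantees, and that the membership test $\frac{c}{d} \in A$ reduces to the decidable divisibility relation coming from the divisor function $D$.
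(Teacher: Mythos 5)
Your proposal is correct and follows essentially the same route as the paper: Lagrange interpolation for existence, the standard root-counting argument for uniqueness, computable field operations in $F$ to produce the coefficients, and the reduction of $p(x) \in A[x]$ to decidable divisibility via the divisor function $D$. If anything, you are slightly more careful than the paper, which tests $d_i \in D(c_i)$ without noting that $D$ is only defined on nonzero elements, whereas you handle the zero-coefficient case explicitly.
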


\begin{proof}
Uniqueness follows from that fact that if two polynomials over a field having degree at most $n$ agree at $n+1$ points, then they must be the same polynomial.  For existence, using Lagrange's method of interpolation for $n+1$ distinct points of the form $(a_i, b_i)$ will result in a polynomial of the following form:
\[
p(x) = \sum_{i=0}^{n} b_i \cdot \frac{(x-a_0) \cdots (x-a_{i-1}) (x-a_{i+1}) \cdots (x-a_n)}{(a_i-a_0) \cdots (a_i-a_{i-1}) (a_i-a_{i+1})\cdots (a_i-a_n)}
\]
Notice that the denominator is nonzero because $A$ is an integral domain and $a_i \neq a_j$ whenever $i \neq j$.  We can computably expand $p(x)$ to write it as $p(x) = \sum_{i=0}^n \frac{c_i}{d_i} x^i$.  We then have that $p(x) \in A[x]$ if and only if $d_i \mid c_i$ for all $i$, which we can verify by checking if $d_i \in D(c_i)$ for all $i$.
\end{proof}

\begin{lemma} \label{l:DivisibilityRelationOnPolyRingIsComputable}
Suppose that $A$ is an SCFFD.  The divisibility relation on $A[x]$ is computable, i.e.~given $f(x),g(x) \in A[x]$, we can computably determine if $f(x) \mid g(x)$ in $A[x]$.
\end{lemma}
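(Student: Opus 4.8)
The plan is to decide $f(x) \mid g(x)$ in $A[x]$ by reducing to a finite search, using the divisor function $D$ to control both the possible degrees and the possible coefficient values of a quotient. First I would handle the trivial cases: if $g(x) = 0$ then $f(x) \mid g(x)$ always, and if $g(x) \neq 0$ but $f(x) = 0$ then $f(x) \nmid g(x)$. So assume both are nonzero, and let $m = \deg(f(x))$ and $n = \deg(g(x))$. If $m > n$ then, since the degree function is additive over the integral domain $A$, no nonzero multiple of $f(x)$ can have degree $n$, so $f(x) \nmid g(x)$ and we output ``no''. Otherwise $m \leq n$, and any quotient $h(x)$ with $f(x)h(x) = g(x)$ must have degree exactly $n - m$.

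The key idea is that I can computably restrict the quotient $h(x)$ to finitely many candidates. Write $h(x) = \sum_{i=0}^{n-m} e_i x^i$. Comparing leading coefficients in $f(x)h(x) = g(x)$, the leading coefficient of $h(x)$ must be a divisor of the leading coefficient of $g(x)$, and more generally each relevant coefficient of $h(x)$ is constrained to divide a known element of $A$. The cleanest way to exploit this is via interpolation, paralleling Kronecker's method. Choose $n - m + 1$ distinct points $a_0, \dots, a_{n-m} \in A$ (available since $A$ is infinite) at which $f(a_j) \neq 0$; such points exist because the nonzero polynomial $f(x)$ has at most $m$ roots in the integral domain $A$, and I can find them computably by evaluating $f$ at successive elements of $A$ and testing for zero. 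At each such point, if $f(x) \mid g(x)$ then $f(a_j) \mid g(a_j)$ in $A$, and the value $h(a_j)$ of the quotient must be an element of $A$ satisfying $f(a_j) \cdot h(a_j) = g(a_j)$. Since $f(a_j) \neq 0$, the set of possible values for $h(a_j)$ is contained in the finite computable set $D(g(a_j))$ of divisors of $g(a_j)$.

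Now I iterate over all finitely many tuples $(b_0, \dots, b_{n-m})$ with each $b_j \in D(g(a_j))$, which is a computable finite search by virtue of the divisor function $D$. For each such tuple I invoke Lemma \ref{l:InterpoteAndCheckIfInA} to compute the unique interpolating polynomial $p(x) \in F[x]$ of degree at most $n - m$ with $p(a_j) = b_j$, and to decide whether $p(x) \in A[x]$. For those candidates lying in $A[x]$, I compute the product $f(x) \cdot p(x)$ in $A[x]$ and check whether it equals $g(x)$. I output ``yes'' if and only if some candidate passes this final equality test. Correctness holds because any genuine quotient $h(x) \in A[x]$ has each $h(a_j) \in D(g(a_j))$ and has degree at most $n - m$, so it arises as $p(x)$ for the tuple $(h(a_0), \dots, h(a_{n-m}))$ and is detected; conversely any candidate passing the equality test is a witness to divisibility.

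The main obstacle, and the reason the degree bound matters, is ensuring the search is genuinely \emph{finite and bounded in advance}: without fixing $\deg(h(x)) = n - m$ up front there would be infinitely many interpolation problems to consider. Pinning the quotient's degree down exactly from the additivity of $\deg$ over the integral domain is what makes the number of interpolation points, and hence the number of candidate tuples, a computable finite quantity. The remaining work is routine: evaluation of polynomials, the divisor-set lookups via $D$, the interpolation and membership test supplied by Lemma \ref{l:InterpoteAndCheckIfInA}, and the final multiplication and comparison in $A[x]$ are all computable, so assembling them yields a decision procedure for divisibility.
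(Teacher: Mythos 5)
Your route is genuinely different from the paper's. The paper proves this lemma with no interpolation at all: it performs polynomial long division in $F[x]$ to write $g(x) = q(x)f(x) + r(x)$, notes that by uniqueness of quotients and remainders in $F[x]$ we have $f(x) \mid g(x)$ in $A[x]$ if and only if $r(x) = 0$ and $q(x) \in A[x]$, and then applies the membership test from Lemma \ref{l:InterpoteAndCheckIfInA} (the part that decides whether a given element of $F[x]$ lies in $A[x]$). You instead run a Kronecker-style interpolation search for the quotient, which is essentially the argument the paper reserves for Theorem \ref{t:PolynomialRingOverSCFFDisSCFFD} itself. Your plan can be made to work, and the final multiply-and-compare step guarantees soundness, but as written it has two concrete defects.

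First, your candidate sets $D(g(a_j))$ need not exist. You select the points $a_j$ subject only to $f(a_j) \neq 0$, so some $a_j$ may be a root of $g(x)$; but the divisor function $D$ of an SCFFD is defined only on $A \setminus \{0\}$, necessarily so, since every element of $A$ divides $0$ and hence $0$ has no finite divisor set. On such inputs your algorithm is simply undefined. The patch is easy: demand $f(a_j)g(a_j) \neq 0$, which is achievable because $f(x)g(x)$ is a nonzero polynomial with at most $m+n$ roots; or note that since $f(a_j) \neq 0$ the quotient's value at $a_j$ is forced, so when $g(a_j) = 0$ the only candidate value is $0$. (That forcing also shows your search over tuples is redundant: there is only ever one viable tuple, namely the one with $b_j = g(a_j)/f(a_j)$, which is in effect what the paper's division argument exploits directly, and why its proof is shorter.) Second, you invoke the infinitude of $A$ to obtain $n - m + 1$ distinct points, but the lemma does not hypothesize that $A$ is infinite, and the paper's long-division proof needs no such assumption. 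This is harmless only after you add the (nonuniform) observation that a finite SCFFD is a finite field, since a finite integral domain is a field, in which case $A = F$ and divisibility in $A[x]$ is decided by long division inside $A[x]$; without that case split, your proof establishes less than the statement claims.
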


\begin{proof}
Let $f(x),g(x) \in A[x]$ be arbitrary.  If $g(x) = 0$, then trivially we have $f(x) \mid g(x)$.  Suppose then that both $g(x)$ is nonzero.  Perform polynomial long division (or search) to find $q(x),r(x) \in F[x]$ with $f(x) = q(x)g(x) + r(x)$ and either $r(x) = 0$ or $\deg(r(x)) < \deg(g(x))$.  Since quotients and remainders are unique in $F[x]$, we have that $g(x) \mid f(x)$ in $A[x]$ if and only if $q(x) \in A[x]$ and $r(x) = 0$.  Since we can computably determine if an element of $F[x]$ is in $A[x]$ as in Lemma \ref{l:InterpoteAndCheckIfInA}, this completes the proof.
\end{proof}

\begin{proof}[Proof of Theorem \ref{t:PolynomialRingOverSCFFDisSCFFD}]
Let $f(x) \in A[x]$ be arbitrary, and let $n = \deg(f(x))$.  Suppose that $g(x) \in A[x]$ is such that $g(x) \mid f(x)$.  First notice that $\deg(g(x)) \leq n$ because the degree function is additive (as $A$ is an integral domain).  Now if we fix $h(x) \in A[x]$ with $g(x)h(x) = f(x)$, we then have $g(a)h(a) = f(a)$ for all $a \in A$, so since $f(a),g(a),h(a) \in A$ for all $a \in A$, we have that $g(a) \mid f(a)$ for all $a \in A$.

Search until we find $n+1$ many distinct elements $a_0,a_1,\dots,a_n \in A$ such that $f(a_i) \neq 0$ for all $i$ (such $a_i$ exist because $A$ is infinite and $f(x)$ has at most $n$ roots in $A$).  Since $A$ is an SCFFD, we have that $f(a_i)$ has only finitely many divisors for each $i$, and we can compute the finite sets $D(f(a_i))$.  Suppose that we pick elements $b_i \in D(f(a_i))$ for each $i$.  From Lemma \ref{l:InterpoteAndCheckIfInA}, there is a unique element $p(x) \in F[x]$ with $\deg(p(x)) \leq n$ and $p(a_i) = b_i$ for all $i$, and we can compute this polynomial $p(x)$ and determine if $p(x) \in A[x]$.  As we do this for each choice of the $b_i$, we obtain a finite subset of $A[x]$ of all possible divisors of $f(x)$.  Now using Lemma \ref{l:DivisibilityRelationOnPolyRingIsComputable}, we can thin out this set to form the actual finite set of divisors of $f(x)$.
\end{proof}

\section{Irreducibles Computable and Primes Noncomputable}

Let $A$ be an integral domain that is an SCFFD and suppose that $q$ is a prime of $A$.  Suppose that we want to destroy the primeness of $q$ while maintaining its irreducibility (say in response to a $\Sigma_1^0$ event such as the halting of a program).  The idea is to introduce a new element $x$ so that $q \mid x^2$ but $q \nmid x$.  If we let $F$ be the field of fractions of $A$, then we can accomplish this by working in $F[x]$, and extending $A$ to the subring $A[\frac{x^2}{q}]$ of $F[x]$.  More explicitly, $A[\frac{x^2}{q}]$ is the set of all polynomials of the form
\[
a_0 + a_1x + \frac{a_2}{q} \cdot x^2 + \frac{a_3}{q} \cdot x^3 + \frac{a_4}{q^2} \cdot x^4 + \frac{a_5}{q^2} \cdot x^5 + \dots + \frac{a_n}{q^{\lfloor n/2 \rfloor}} \cdot x^n
\]
where each $a_i \in A$.  Although this works, we will find it more convenient notationally to work with subring $B$ of $F[x]$ consisting of those polynomials of the form
\[
a_0 + a_1x + \frac{a_2}{q^2} \cdot x^2 + \frac{a_3}{q^3} \cdot x^3 + \frac{a_4}{q^4} \cdot x^4 + \frac{a_5}{q^5} \cdot x^5 + \dots + \frac{a_n}{q^n} \cdot x^n
\]
where each $a_i \in A$.

\begin{theorem} \label{t:PropoertiesAfterDestroyingOnePrime}
Let $A$ be an SCFFD and let $q \in A$ be prime.  Let $F$ be the field of fractions of $A$ and let $B$ be the subring of $F[x]$ consisting of those polynomials of the form
\[
a_0 + a_1x + \frac{a_2}{q^2} \cdot x^2 + \frac{a_3}{q^3} \cdot x^3 + \frac{a_4}{q^4} \cdot x^4 + \frac{a_5}{q^5} \cdot x^5 + \dots + \frac{a_n}{q^n} \cdot x^n
\]
where each $a_i \in A$.  We then have the following.
\begin{enumerate}
\item \label{e:DivisorsPreservedInPrimeDestruction} For any $a \in A$, the set of divisors of $a$ in $A$ equals the set of divisors of $a$ in $B$.
\item $B$ is an SCFFD.  Moreover, given $A$, $q$, and an index for a function $D$ witnessing that $A$ is an SCFFD, we can computably build $B$ as an extension of $A$ and obtain an index for a function $D'$ witnessing that $B$ is an SCFFD with the property that $D'(a) = D(a)$ for all $a \in A$.
\item \label{e:UnitsPreservedInPrimeDestruction} $U(B) = U(A)$.
\item If $p$ is irreducible in $A$, then $p$ is irreducible in $B$.
\item If $p_1,p_2 \in A$ are irreducibles that are not associates in $A$, then they are not associates in $B$.
\item $q$ is not prime in $B$.
\item If $p$ is a prime of $A$ that is not an associate of $q$, then $p$ is prime in $B$.
\end{enumerate}
\end{theorem}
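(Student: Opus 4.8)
The plan is to check the defining property of a prime directly, via a Gauss/content-style argument adapted to the $q$-denominators defining $B$. The easy part is being nonzero and a nonunit: $p \neq 0$, and since $p$ is not a unit in $A$, part (\ref{e:UnitsPreservedInPrimeDestruction}) gives $U(B) = U(A)$, so $p$ is not a unit in $B$. The substance is then to show that $p \mid fg$ in $B$ forces $p \mid f$ or $p \mid g$.

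First I would establish a clean divisibility criterion. Each $h \in B$ has a unique expression $h = \sum_i \frac{a_i}{q^{e_i}} x^i$ with $a_i \in A$, where $e_0 = e_1 = 0$ and $e_i = i$ for $i \ge 2$. Dividing the degree-$i$ coefficient by $p$ does not touch the $q$-denominator, so $h/p \in B$ exactly when $p \mid a_i$ in $A$ for every $i$; thus $p \mid h$ in $B$ if and only if $p$ divides all of the numerators $a_i$. I would use this as the working description of divisibility by $p$ for the rest of the argument.

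Next, writing $f = \sum_i \frac{a_i}{q^{e_i}} x^i$ and $g = \sum_j \frac{b_j}{q^{e_j}} x^j$ and multiplying, the degree-$k$ numerator of $fg$ is $d_k = \sum_{i+j=k} a_i b_j\, q^{e_k - e_i - e_j}$, with each exponent $e_k - e_i - e_j \ge 0$ because the $e_i$ are superadditive ($e_{i+j} \ge e_i + e_j$) — the same inequality that makes $B$ closed under multiplication. Assuming toward a contradiction that $p \nmid f$ and $p \nmid g$, I would pick the least $i_0$ with $p \nmid a_{i_0}$ and the least $j_0$ with $p \nmid b_{j_0}$, and examine $d_{k_0}$ for $k_0 = i_0 + j_0$. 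Every term of $d_{k_0}$ except the $(i_0,j_0)$ term has $i < i_0$ (hence $p \mid a_i$) or $i > i_0$ and so $j < j_0$ (hence $p \mid b_j$), so modulo $p$ only a single term survives: $d_{k_0} \equiv a_{i_0} b_{j_0}\, q^{m} \pmod{p}$ with $m = e_{k_0} - e_{i_0} - e_{j_0} \ge 0$.

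The punchline uses the non-associativity hypothesis. Since $p \mid d_{k_0}$, we get $p \mid a_{i_0} b_{j_0} q^{m}$ in $A$; as $p$ is prime and divides neither $a_{i_0}$ nor $b_{j_0}$, this forces $m \ge 1$ and $p \mid q$. But $q$ is irreducible (being prime), so its only nonunit divisors are its associates, and $p$ is a nonunit that is not an associate of $q$ — a contradiction (and when $m = 0$ the contradiction $p \mid a_{i_0}b_{j_0}$ is immediate). Hence $p \mid f$ or $p \mid g$, so $p$ is prime in $B$. I expect the one genuinely fiddly point to be the exponent bookkeeping: verifying $e_{i+j} \ge e_i + e_j$ so that $d_k \in A$, and confirming that exactly one term survives modulo $p$. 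Once the divisibility criterion is set up, the remainder is forced.
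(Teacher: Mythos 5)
Your proposal proves only item (7) of the seven-part theorem. For that item the argument is correct, and it is essentially the paper's own proof: the paper likewise reduces $p \mid h$ in $B$ to $p$ dividing every numerator $a_i$ in $A$, assumes $p \nmid f$ and $p \nmid g$, picks extremal indices (largest rather than your least --- either works), and observes that modulo $p$ the numerator of $x^{i_0+j_0}$ collapses to the single term $a_{i_0}b_{j_0}q^m$, whence primality of $p$ in $A$ together with the non-associate hypothesis on $q$ yields the contradiction. (A small point in your favor: the paper asserts the surviving exponent lies in $\{0,1\}$, but when $i_0 = j_0 = 1$ it equals $2$; your bookkeeping with a general exponent $m \geq 0$ via superadditivity of the $e_i$ is the correct statement.)

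The gap is everything else. You explicitly invoke item (3), $U(B) = U(A)$, to conclude that $p$ is a nonunit of $B$, but never prove it, and items (1)--(6) are otherwise unaddressed. Items (1) and (3)--(6) are quick: (1) is a degree argument in $F[x]$ (both factors of a constant are constant, and the constants of $B$ lie in $A$), (3) follows since $U(B)$ is the set of divisors of $1$, (4) and (5) follow from (1) and (3), and (6) is the computation $x^2/q = (q/q^2)\cdot x^2 \in B$ while $x/q \notin B$ because $q$ is not a unit. The serious omission is item (2), which is the computability-theoretic heart of the theorem and does not follow from anything in your argument: one must build $B$ as a computable extension of $A$, decide membership of elements of $F[x]$ in $B$ (which reduces, via the function $D$, to deciding whether $b \mid aq^k$ in $A$), decide divisibility in $B$, and adapt the Kronecker--Lagrange interpolation method of Theorem \ref{t:PolynomialRingOverSCFFDisSCFFD}, evaluating at the points $q^i$ for $i \geq \deg(f(x))$ precisely so that the values of $f$ and of any potential divisors land in $A$. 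Without (2), the theorem as stated --- in particular its ``moreover'' clause producing an index for $D'$ uniformly --- is unproven.
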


\begin{proof}
\begin{enumerate}
\item Let $a \in A$.  Clearly, if an element of $A$ divides $a$ in $A$, then it divides $a$ in $B$.  For the converse, since the degree function is additive on $F[x]$, if $f(x),g(x) \in B$ are such that $a = f(x)g(x)$, then we must have $\deg(f(x)) = 0 = \deg(g(x))$, and hence $f(x),g(x) \in A$.

\item Notice first that we computably build $B$ as an extension of $A$ trivially, because if $\frac{a}{q^k} = \frac{b}{q^k}$, then $a = b$ (so there is no issue of distinct representations).  The proof that $B$ is an SCFFD is analogous to the proof of Theorem \ref{t:PolynomialRingOverSCFFDisSCFFD}, with a few straightforward modifications.  Given $f(x) \in B$ with $\deg(f(x)) = n$, to determine the divisors of $f(x)$ in $B$, we note the following:
\begin{itemize}
\item Notice that if $f(x) \in B$ and $a \in A$, then in general it need not be the case that $f(a) \in A$.  However, we will only plug in values $q^i$ for $i \geq n$ to avoid this issue.  Suppose then that $g(x) \in B$ with $g(x) \mid f(x)$, and fix $h(x) \in B$ with $g(x)h(x) = f(x)$.  We then have that $\deg(g(x)) \leq n$ and $\deg(h(x)) \leq n$.  Thus, for any $i \geq n$, we have $f(q^i), g(q^i), h(q^i) \in A$, and so $g(q^i) \mid f(q^i)$ in $A$.  Since there are infinitely many $i \geq n$, and these $q^i$ provide an infinite supply of distinct elements (because $A$ is an integral domain), we can plug in $n+1$ many such values with $f(q^i) \neq 0$ to form the basis for our Lagrange interpolations. 
\item We can computably determine if an element $p(x) \in F[x]$ is actually an element of $B$.  The key question is given $a,b \in A$ with $b \neq 0$ and a $k \geq 2$, can we determine if we can write an element $\frac{a}{b}$ of $F$ in the form $\frac{c}{q^k}$.  Notice that this is possible if and only if there exists $c \in A$ with $aq^k = bc$, which is if and only if $b \mid aq^k$.  Since $A$ is an SCFFD, we can computably determine if $b \in D(aq^k)$, and furthermore in this case we can computably find $c$ with $bc = aq^k$ and hence $\frac{a}{b}= \frac{c}{q^k}$,   Thus, we can determine if an element of $F[x]$ is an element of $B$, and if so write it in the above form.
\item The divisibility relation is computable on $B$ as in Lemma \ref{l:DivisibilityRelationOnPolyRingIsComputable}, because we can computably determine if an element of $F[x]$ is an element of $B$ as just mentioned.
\end{itemize}
This shows that $B$ is an SCFFD and allows us to compute $D'$ uniformly from $A$ and $D$.  Finally, notice that $D'$ extends $D$ by \ref{e:DivisorsPreservedInPrimeDestruction}.

\item Immediate from \ref{e:DivisorsPreservedInPrimeDestruction} and the fact that $U(B) = D(1)$.

\item This follow from \ref{e:DivisorsPreservedInPrimeDestruction} and \ref{e:UnitsPreservedInPrimeDestruction}.

\item Immediate from \ref{e:UnitsPreservedInPrimeDestruction}.

\item Notice that $q$ is nonzero and not a unit by \ref{e:UnitsPreservedInPrimeDestruction}.  We have that $q \mid x^2$ in $B$ because $\frac{1}{q} \cdot x^2 = \frac{q}{q^2} \cdot x^2 \in B$, but $q \nmid x$ because $\frac{1}{q} \cdot x \notin B$ as $q$ is not a unit (and this is the only possible witness for divisibility because $F[x]$ is an integral domain).  Therefore, $q$ is not prime in $B$.

\item Let $p$ be a prime of $A$ that is not an associate of $q$.  Notice that $p$ is nonzero and not a unit of $B$ by \ref{e:UnitsPreservedInPrimeDestruction}.  Let $f(x),g(x) \in B$, and suppose that $p \mid f(x)g(x)$ in $B$.  Write out
\begin{align*}
f(x) & = a_0 + a_1x + \frac{a_2}{q^2} \cdot x^2 + \frac{a_3}{q^3} \cdot x^3 + \dots + \frac{a_n}{q^n} \cdot x^n \\
g(x) & = b_0 + b_1x + \frac{b_2}{q^2} \cdot x^2 + \frac{b_3}{q^3} \cdot x^3 + \dots + \frac{b_n}{q^n} \cdot x^n \\
f(x)g(x) & = c_0 + c_1x + \frac{c_2}{q^2} \cdot x^2 + \frac{c_3}{q^3} \cdot x^3 + \dots + \frac{c_n}{q^n} \cdot x^n
\end{align*}
Since $p \mid f(x)g(x)$ in $B$, we have that $p \mid c_i$ in $A$ for all $i$.  Assume that $p \nmid f(x)$ and $p \nmid g(x)$ in $B$.  Then there must exist $i$ and $j$ such that $p \nmid a_i$ in $A$ and $p \nmid b_j$ in $A$.  Let $k$ and $\ell$ be largest possible such that $p \nmid a_k$ in $A$ and $p \nmid b_{\ell}$ in $A$.  Now element $c_{k+\ell}$ will be a sum of terms, one of which will be $a_kb_{\ell}q^j$ for some $j \in \{0,1\}$, while other terms will be divisible by $p$ in $A$.  Since $p$ divides $c_{k+\ell}$, it follows that $p \mid a_kb_{\ell}q^j$ in $A$.  However, this is a contradiction because $p$ is prime in $A$ but divides none of $a_k$, $b_{\ell}$, or $q$ (the last because $p$ is not an associate of $q$ in $A$).
\end{enumerate}
\end{proof}

We now show that we can code an arbitrary $\Pi_1^0$ set into the primes of an integral domain $A$ while maintaining the computability of the irreducible elements.  In fact, we perform our coding within the normal integer primes and can make the resulting integral domain an SCFFD.

\begin{theorem} \label{t:Pi1ControlOfPrimes}
Let $S$ be a $\Sigma_1^0$ set, and let $p_0,p_1,p_2,\dots$ list the usual primes from $\mathbb{N}$ in increasing order.  There exists an SCFFD $A$ such that:
\begin{itemize}
\item $\mathbb{Z}$ is a subring of $A$.
\item $U(A) = \{1,-1\}$.
\item Every $p_i$ is irreducible in $A$.
\item $p_i$ is prime in $A$ if and only if $i \notin S$.
\end{itemize}
\end{theorem}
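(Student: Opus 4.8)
The plan is to build $A$ as the union of a computable chain of SCFFDs $\mathbb{Z} = A_0 \subseteq A_1 \subseteq A_2 \subseteq \cdots$, where we pass from $A_s$ to $A_{s+1}$ by destroying the primeness of a single integer prime $p_i$ exactly when we witness $i$ entering $S$. The base ring $A_0 = \mathbb{Z}$ is an SCFFD. Fix a computable relation $R$ with $i \in S \iff (\exists x)\, R(x,i)$ and dovetail through all pairs $(x,i)$: at stage $s$, if the $s$-th pair is some $(x,i)$ with $R(x,i)$ holding and $i$ not yet handled, apply Theorem \ref{t:PropoertiesAfterDestroyingOnePrime} to the current ring $A_s$ and the prime $q = p_i$ to obtain $A_{s+1}$; otherwise set $A_{s+1} = A_s$. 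Before each application we must know that $p_i$ is still prime in $A_s$: this holds because $p_i$ is prime in $A_0 = \mathbb{Z}$, and every earlier destruction was of some $p_j$ with $j \neq i$, so by repeated use of parts (5) and (7) of Theorem \ref{t:PropoertiesAfterDestroyingOnePrime} (distinct integer primes are non-associates in $\mathbb{Z}$, remain non-associates, and primeness of non-associates is preserved) $p_i$ remains prime in $A_s$.

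Granting the construction, I would verify the four bullet points by pushing the corresponding clauses of Theorem \ref{t:PropoertiesAfterDestroyingOnePrime} to the limit, using that divisibility is \emph{local}: since the $A_s$ form a chain, any instance $c \cdot d = a$ in $A = \bigcup_s A_s$ already occurs in a common $A_s$. As $U(A_s) = \{1,-1\}$ for all $s$ by part (3), the same holds for the union, giving $U(A) = \{1,-1\}$ and, together with part (4), the irreducibility of every $p_i$ in $A$. If $i \notin S$, then $p_i$ is never destroyed and so is prime in every $A_s$; locality then makes $p_i$ prime in $A$. If instead $i \in S$, let $s$ be the stage at which $p_i$ is destroyed and let $x$ be the new element with $p_i \mid x^2$ and $p_i \nmid x$ (part (6)); applying part (1) to each subsequent step shows the set of divisors of $x$ never changes, so $p_i \mid x^2$ and $p_i \nmid x$ both persist in $A$, and hence $p_i$ is not prime in $A$.

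The real work is showing that $A$ is a computable SCFFD, and here I would rely on the uniformity clause of Theorem \ref{t:PropoertiesAfterDestroyingOnePrime}(2): from an index for $A_s$ together with its divisor function $D_s$ and the prime $p_i$, we computably obtain an index for $A_{s+1}$ and a divisor function $D_{s+1}$ extending $D_s$, with $A_s \subseteq A_{s+1}$ as coded subsets of $\mathbb{N}$. Iterating this uniformly produces a computable sequence of indices. To make the underlying set decidable rather than merely $\Sigma_1^0$, I would arrange the coding so that each element newly adjoined at stage $s+1$ carries, via a pairing, a record of its stage and of its representation as a fractional polynomial; membership of $n$ in $A$ is then decided by decoding the stage $s$ referenced by $n$, running the dovetailing to stage $s$ (which determines $A_s$), and checking that $n$ is a legitimate code there. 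Because the chain only grows, once an element appears it remains, so this yields a decidable set with computable $+$ and $\cdot$. For the divisor function, any nonzero $a \in A$ lies in some $A_s$, and iterating part (1) shows the divisors of $a$ in $A$ coincide with $D_s(a)$; since $D_{s+1}$ extends $D_s$, the value $D(a) := D_s(a)$ is well defined and computable.

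The main obstacle is precisely this computability step: the algebra is handled cleanly by Theorem \ref{t:PropoertiesAfterDestroyingOnePrime}, but one must choose the coding of the chain so that the union is genuinely decidable and so that the ring operations and divisor function can be read off uniformly from the stage data. The preservation clause (1) and the ``$D'$ extends $D$'' property are exactly what guarantee that the finite amount of information needed about any fixed element stabilizes at the stage where that element first appears, which is what makes the limit a computable SCFFD rather than merely a limit of computable objects.
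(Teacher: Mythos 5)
Your proposal is correct and follows essentially the same route as the paper: iterate the prime-destroying construction of Theorem \ref{t:PropoertiesAfterDestroyingOnePrime} along an enumeration of $S$, take the increasing union, and use the preservation clauses (divisors, units, irreducibility, primality of non-associates) together with locality of divisibility to verify the four properties in the limit. The only differences are bookkeeping: the paper invokes Proposition \ref{p:Sigma1IffRangeComputable} to get an injective enumeration of $S$ (splitting into the finite and infinite cases) where you dovetail over the witnessing relation $R$ directly, and you spell out the stage-tagged coding that makes the union decidable, a point the paper treats more tersely via the uniformity clause of Theorem \ref{t:PropoertiesAfterDestroyingOnePrime}(2).
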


\begin{proof}
If $S = \emptyset$, this is trivial by letting $A = \mathbb{Z}$.  Assume then that $S \neq \emptyset$.  If $S$ is finite, say $|S| = n$, then we can trivially fix a computable injective function $\alpha \colon \{1,2,\dots,n\} \to \mathbb{N}$ with $\text{range}(\alpha) = S$.  If $S$ is infinite, then we can fix a computable injective function $\alpha \colon \mathbb{N} \to \mathbb{N}$ with $\text{range}(\alpha) = S$ by Proposition \ref{p:Sigma1IffRangeComputable}.

We build our computable SCFFD $A$ in stages, starting by letting $A_0 = \mathbb{Z}$ and letting $D_0(a)$ be the finite set of divisors of $a$ for all $a \in \mathbb{Z} \backslash \{0\}$.  Suppose that we are at a stage $k$ and have constructed an SCFFD $A_k$ together with witnessing function $D_k$.  We now extend $A_k$ to $A_{k+1}$ by destroying the primality of $p_{\alpha(k)}$ as in the construction of Theorem \ref{t:PropoertiesAfterDestroyingOnePrime} using a new indeterminate $x_k$.  In other words, letting $F_k$ be the field of fractions of $A_k$, we let $A_{k+1}$ be the subring of $F_k[x]$  consisting of those polynomials of the form
\[
a_0 + a_1x + \frac{a_2}{p_{\alpha(k)}^2} \cdot x_k^2 + \frac{a_3}{p_{\alpha(k)}^3} \cdot x_k^3 + \frac{a_4}{p_{\alpha(k)}^4} \cdot x_k^4 + \dots + \frac{a_n}{p_{\alpha(k)}^n} \cdot x_k^n
\]
where each $a_i \in A_k$.  We continue this process through the construction of $A_n$ if $|S| = n$, and infinitely often if $S$ is infinite.  Using Theorem \ref{t:PropoertiesAfterDestroyingOnePrime}, the following properties hold by induction on $k$:
\begin{itemize}
\item $A_k$ is an SCFFD with witnessing function $D_k$ extending $D_i$ for all $i < k$.
\item $U(A_k) = \{1,-1\}$.
\item Every $p_i$ is irreducible in $A_k$.
\item $p_i$ is prime in $A_k$ if and only if $i \notin \{\alpha(1),\alpha(2),\dots,\alpha(k)\}$.
\end{itemize}
Now if $S$ is finite, say $|S| = n$, then it follows that the integral domain $A_n$ has the required properties.

Suppose then that $S$ is infinite, and let $A = A_{\infty} = \bigcup_{k=0}^{\infty} A_k$.  Also, let $D = \bigcup_{k=1}^{\infty} D_k$, which makes sense because the $D_i$ extend each other as functions.  Notice that $D$ is a computable function and that for any $a \in A_k$, we have that the set of divisors of $a$ in $A$ equals the set of divisors of $a$ in $A_k$, so $D(a) = D_k(a)$ is the finite set of divisors of $a$ in $A$.  Therefore, $A$ is an SCFFD as witnessed by $D$.  Since $U(A_k) = \{1,-1\}$ for all $k \in \mathbb{N}$, it follows that $U(A) = \{1,-1\}$.  Since we maintain the units and divisibility at each stage, it also follows that every $p_i$ is irreducible in $A$.

We now show that $p_i$ is prime in $A$ if and only if $i \notin S$.  First notice that each $p_i$ is nonzero and not a unit of $A$.
\begin{itemize}
\item Suppose first that $i \notin S$.  We then have that $i \notin \text{range}(\alpha)$, so $p_i$ is prime in every $A_k$ by the last property above.  Let $a,b \in A$, and suppose that $p_i \mid ab$ in $A$.  Fix $c \in A$ with $p_ic = ab$.  Go to a point $k$ where each of $p_i,a,b,c$ exist.  We then have that $p_i \mid ab$ in $A_k$, so as $p_i$ is prime in $A_k$, either $p_i \mid a$ in $A_k$ or $p_i \mid b$ in $A_k$.  Therefore, either $p_i \mid a$ in $A$ or $p_i \mid b$ in $A$.  It follows that $p_i$ is prime in $A_k$.
\item Suppose now that $i \in S$.  Thus, we can fix $k \in \mathbb{N}$ with $\alpha(k) = i$.  We then have that $p_i$ is not prime in $A_{k+1}$ by the last property above.  Fix $a,b \in A_{k+1}$ such that $p_i \mid ab$ in $A_{k+1}$ but $p_i \nmid a$ in $A_{k+1}$ and $p_i \nmid b$ in $A_{k+1}$.  Since the $D_i$ extend each other as functions, and $A$ is an SCFFD as witnessed by $D$, it follow that $p_i \mid ab$ in $A$ but $p_i \nmid a$ in $A$ and $p_i \nmid b$ in $A$.  Therefore, $p_i$ is not prime in $A$.
\end{itemize}
\end{proof}

\begin{corollary}
There exists a computable integral domain $A$ such that the set of irreducible elements of $A$ is computable but the set of prime elements of $A$ is not computable.
\end{corollary}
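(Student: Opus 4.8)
The plan is to deduce this immediately from Theorem~\ref{t:Pi1ControlOfPrimes} by feeding in an appropriate noncomputable $\Sigma_1^0$ set. First I would fix $S$ to be a $\Sigma_1^0$ set that is \emph{not} computable; the standard example is the halting set, i.e.\ the set of natural numbers coding programs that halt, which is $\Sigma_1^0$ but noncomputable as noted in the introduction. Applying Theorem~\ref{t:Pi1ControlOfPrimes} to this $S$ then produces an SCFFD $A$ containing $\mathbb{Z}$ as a subring, with $U(A) = \{1,-1\}$, with every $p_i$ irreducible in $A$, and with $p_i$ prime in $A$ if and only if $i \notin S$.

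Next I would record that the set of irreducible elements of $A$ is computable. This is not something that needs fresh work: since $A$ is an SCFFD, the earlier proposition on SCFFDs (the one establishing that an SCFFD has finite, computable unit group and a computable set of irreducibles) applies directly. Thus the ``irreducibles computable'' half of the statement is handed to us for free.

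The remaining point, and the only place requiring an argument, is to show that the set of prime elements of $A$ is \emph{not} computable. Here I would argue by contradiction via a reduction. Suppose the set of primes of $A$ were computable. The map $i \mapsto p_i$ is a computable function (the $p_i$ list the integer primes in increasing order, and $\mathbb{Z} \subseteq A$), so we could computably produce $p_i$ and then test whether $p_i$ lies in the (assumed computable) set of primes of $A$. By the last clause of Theorem~\ref{t:Pi1ControlOfPrimes}, this test answers ``yes'' exactly when $i \notin S$. Hence we could decide membership in the complement of $S$, and therefore decide membership in $S$ itself, contradicting the noncomputability of $S$. I expect this reduction step to be the main (and essentially the only) substantive move, though it is routine: the real content is already packaged inside Theorem~\ref{t:PropoertiesAfterDestroyingOnePrime} and Theorem~\ref{t:Pi1ControlOfPrimes}, and the corollary is just a matter of choosing $S$ noncomputable and reading off the conclusion.
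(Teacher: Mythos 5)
Your proposal is correct and is essentially identical to the paper's own proof: fix a noncomputable $\Sigma_1^0$ set $S$, apply Theorem~\ref{t:Pi1ControlOfPrimes}, get computability of irreducibles from the SCFFD property, and observe that computing the primes would let one compute $S$ via $i \mapsto p_i$. The paper states the final reduction more tersely, but the argument is the same.
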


\begin{proof}
Fix a noncomputable $\Sigma_1^0$ set $S$, and let $A$ be the SCFFD given by Theorem \ref{t:Pi1ControlOfPrimes}.  Since $A$ is an SCFFD, it is a computable integral domain and the set of irreducible elements of $A$ is computable.  However, the set of prime elements of $A$ is not computable, because if we could compute it, then we could compute $S$, which is a contradiction.
\end{proof}

\section{Primes Computable and Irreducibles Noncomputable}

Consider the subring $A = \mathbb{Z} + x\mathbb{Z} + x^2\mathbb{Q}[x]$ of $\mathbb{Q}[x]$.  In other words, $A$ is the set of polynomials of the form $q_0 + q_1x + q_2x^2 + \dots + q_nx^n$ where $q_0 \in \mathbb{Z}$ and $q_1 \in \mathbb{Z}$.  As mentioned in the introduction, each normal integer prime is irreducible in $A$ but is not prime in $A$.  It is also a standard fact for $p(x) \in A$, we have that $p(x)$ is prime in $A$ if and only if $p(x)$ is irreducible in $\mathbb{Q}[x]$ and $p(0) \in \{1,-1\}$.

We will generalize this construction by replacing $\mathbb{Z}$ with an arbitrary integral domain.  Suppose that $R$ is an integral domain, and let $F$ be its field of fractions.  Consider the subring $A = R + xR + x^2F[x]$ of $F[x]$, i.e.~$A$ is the set of polynomials of the form $q_0 + q_1x + q_2x^2 + \dots + q_nx^n$ where $q_0 \in R$ and $q_1 \in R$.  Such an integral domain $A$ is particularly nice from our perspective because the irreducibles in $R$ will remain irreducible in $A$ (so all of the complexity of irreducibles remain), but no element of $R$ is prime in $A$ (so any complexity of primes is ``erased").  Moreover, we can reduce the complexity of primality of elements of $A$ to that of irreducibles in the polynomial ring over a field, about which a great deal is understood.

\begin{lemma} \label{l:PrimesInAAreNonconstantAndIrreducible}
Let $R$ be an integral domain with field of fractions $F$.  Consider the subring $A = R + xR + x^2F[x]$ of $F[x]$.  Let $p(x) \in A$.  If $p(x)$ is prime in $A$, then $p(x)$ is non-constant and irreducible in $F[x]$.
\end{lemma}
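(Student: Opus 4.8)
The plan is to prove the two conclusions separately: first that a prime $p(x)$ cannot be constant, and then that a nonconstant prime must be irreducible in $F[x]$.

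For the first part, I would note that the constant elements of $A$ are exactly the elements of $R$, so it suffices to show that no element of $R$ is prime in $A$. A zero or a unit is never prime, so suppose $c \in R$ is nonzero and not a unit. The element $c$ behaves in $A$ exactly like the integer primes did in the motivating example from the introduction: since $\frac{1}{c} \in F$, we have $\frac{x^2}{c} = \frac{1}{c}x^2 \in x^2F[x] \subseteq A$, so $c \mid x^2$ in $A$; but $\frac{x}{c} = \frac{1}{c}x \in A$ would force $\frac{1}{c} \in R$ and hence $c$ a unit of $R$, contrary to assumption, so $c \nmid x$ in $A$. Taking $a = b = x$ then shows $c \mid ab$ while $c \nmid a$ and $c \nmid b$, so $c$ is not prime. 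Hence every prime of $A$ is nonconstant.

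For the second part, suppose toward a contradiction that the prime $p(x)$ is reducible in $F[x]$; being nonconstant, we may write $p(x) = f(x)g(x)$ with $f(x), g(x) \in F[x]$ and $\deg f, \deg g \geq 1$. The key maneuver is to clear denominators into $R$ rather than to multiply by powers of $x$: since $R[x] \subseteq A$, I can choose nonzero $s, t \in R$ (common denominators of the coefficients of $f$ and of $g$) so that $sf(x)$ and $tg(x)$ both lie in $R[x] \subseteq A$. Then
\[
\bigl(sf(x)\bigr)\bigl(tg(x)\bigr) = (st)\,f(x)g(x) = (st)\,p(x),
\]
so $p(x) \mid \bigl(sf(x)\bigr)\bigl(tg(x)\bigr)$ in $A$. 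Since $p(x)$ is prime in $A$, it divides one of the factors; say $p(x) \mid sf(x)$ in $A$, and hence in $F[x]$. Writing $sf(x) = p(x)h(x) = f(x)g(x)h(x)$ in $F[x]$ and cancelling the nonzero $f(x)$ (valid since $F[x]$ is an integral domain) gives $s = g(x)h(x)$. But $s$ is a nonzero constant, so $0 = \deg s = \deg g + \deg h \geq \deg g \geq 1$, a contradiction. Therefore $p(x)$ admits no nontrivial factorization in $F[x]$, i.e.\ it is irreducible there.

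The step I expect to require the most care is the choice of witnesses in the second part. The naive idea of multiplying $f$ and $g$ by $x^2$ to land them in $A$ does produce an element divisible by $p(x)$, but it runs into trouble precisely when $x \mid p(x)$ (so that a factor is, up to a unit of $F$, a power of $x$): there $p(x)$ can end up dividing one of the two witnesses, and the argument stalls in an annoying case analysis. Multiplying instead by constants from $R$ sidesteps this entirely, because it forces the cofactor $h(x)$ to satisfy $g(x)h(x) = s$, a constant, which is impossible once $\deg g \geq 1$. The only ingredients are the containment $R[x] \subseteq A$, the ability to clear denominators in $R$, and cancellation in the integral domain $F[x]$.
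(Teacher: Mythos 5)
Your proof is correct and follows essentially the same route as the paper's: constants are eliminated via $c \mid x^2$ but $c \nmid x$, and reducible nonconstant elements are eliminated by clearing denominators into $A$ and applying primeness to the resulting factorization. The only cosmetic differences are that the paper clears only the denominators of the constant and linear coefficients (enough for membership in $A$) and concludes directly from $\deg(bg(x)) < \deg(p(x))$ that $p(x)$ divides neither factor, whereas you clear all denominators and finish by cancellation, which amounts to the same degree obstruction.
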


\begin{proof}
We prove the contrapositive, i.e.~if $p(x) \in A$ is either constant or not irreducible, then $p(x)$ is not prime in $A$.

Suppose first that $p(x)$ is a constant, and fix $k \in R$ with $p(x) = k$.  If $k \in \{0\} \cup U(R)$, then $k$ is either zero or a unit, so $k$ is not prime in $A$ by definition.  Suppose then that $k \notin \{0\} \cup U(R)$.  Notice that $k \mid x^2$ in $A$ because $\frac{1}{k} \cdot x^2 \in A$, but $k \nmid x$ in $A$ because $\frac{1}{k} \cdot x \notin A$.  Therefore, $p(x) = k$ is not prime in $A$.

Suppose now that $p(x) \in A$ is non-constant and not irreducible in $F[x]$.  Since $p(x)$ is non-constant, it is not a unit in $F[x]$.  Fix $g(x),h(x) \in F[x]$ with $p(x) = g(x)h(x)$ and such that $0 < \deg(g(x)) < \deg(p(x))$ and $0 < \deg(h(x)) < \deg(p(x))$.  Now since $g(x),h(x) \in F[x]$, the constant terms and coefficients of $x$ in these polynomials need not be in $R$.  Let $b$ be the product of the denominators of these coefficients in $g(x)$, and let $c$ be the product of the denominators of these coefficients in $h(x)$.  We then have that $p(x) \cdot bc = (b \cdot g(x)) \cdot (c \cdot h(x))$ where both $b \cdot g(x) \in A$ and $c \cdot h(x) \in A$.  Since $bc \in R \subseteq A$, we have that $p(x) \mid (b \cdot g(x)) \cdot (c \cdot h(x))$ in $A$.  However, notice that $p(x) \nmid b \cdot g(x)$ in $A$ because $\deg(b \cdot g(x)) < \deg(p(x))$ and $p(x) \nmid c \cdot h(x)$ because $\deg(c \cdot h(x)) < \deg(p(x))$.  Therefore, $p(x)$ is not prime in $A$.
\end{proof}

\begin{lemma} \label{l:CharacterizePrimesInA}
Let $R$ be an integral domain with field of fractions $F$.  Consider the subring $A = R + xR + x^2F[x]$ of $F[x]$.  Let $p(x) \in A$ and suppose that $p(x)$ is irreducible in $F[x]$.  The following are equivalent.
\begin{enumerate}
\item $p(x)$ is prime in $A$.
\item For all $f(x) \in F[x]$, if $p(x)f(x) \in A$, then $f(x) \in A$.
\item For all $g(x) \in A$ such that $p(x) \mid g(x)$ in $F[x]$, we have that $p(x) \mid g(x)$ in $A$.
\end{enumerate}
\end{lemma}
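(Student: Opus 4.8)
The plan is to prove the three-way equivalence by establishing the cycle $(1) \Rightarrow (2) \Rightarrow (3) \Rightarrow (1)$. Throughout, I would exploit the standing hypothesis that $p(x)$ is irreducible in $F[x]$, which means $p(x)$ is prime in $F[x]$ (since $F[x]$ is a UFD), and in particular $p(x)$ is non-constant. The key recurring idea is that divisibility in $A$ and divisibility in $F[x]$ differ only in whether the quotient, a priori an element of $F[x]$, actually lands inside the smaller ring $A = R + xR + x^2F[x]$.

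First I would prove $(1) \Rightarrow (2)$. Assume $p(x)$ is prime in $A$, and suppose $f(x) \in F[x]$ satisfies $p(x)f(x) \in A$. The obstacle is that $f(x)$ need not lie in $A$ to begin with, so I cannot directly apply primeness. The trick is to clear denominators: let $b \in R$ be a common denominator for the constant term and coefficient of $x$ in $f(x)$, so that $b \cdot f(x) \in A$. Then $p(x) \mid b \cdot (p(x)f(x))$ in $A$ trivially, hence $p(x) \mid (b f(x)) \cdot (p(x))$, and since $p(x)$ is prime in $A$ and $p(x) \nmid p(x) \cdot (\text{unit issues aside})$, I instead argue as follows: $p(x) \mid (b f(x)) \cdot p(x)$ is automatic, so this does not immediately help. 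A cleaner route is to multiply $p(x) f(x) \in A$ by $b$ and observe $p(x) \cdot b f(x) \in A$ with $b f(x) \in A$; writing $b f(x) = p(x) q(x)$ in $F[x]$ where $q(x) = b f(x)/p(x)$, I would show directly that $q(x) \in A$ by using primeness of $p(x)$ in $A$ to divide out, then deduce $f(x) = q(x)/b \in A$ by checking the two low-degree coefficients lie in $R$. This coefficient-bookkeeping at degrees $0$ and $1$ is the main place care is needed.

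Next, $(2) \Rightarrow (3)$ should be essentially immediate: given $g(x) \in A$ with $p(x) \mid g(x)$ in $F[x]$, write $g(x) = p(x) f(x)$ with $f(x) \in F[x]$; then $p(x)f(x) = g(x) \in A$, so by $(2)$ we get $f(x) \in A$, which is exactly the statement that $p(x) \mid g(x)$ in $A$.

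Finally, for $(3) \Rightarrow (1)$, I would verify the defining property of primeness. Since $p(x)$ is non-constant it is nonzero and not a unit of $A$. Suppose $p(x) \mid f(x)g(x)$ in $A$ with $f(x), g(x) \in A \subseteq F[x]$. Then $p(x) \mid f(x)g(x)$ in $F[x]$, and since $p(x)$ is prime in $F[x]$, without loss of generality $p(x) \mid f(x)$ in $F[x]$. Now $f(x) \in A$, so hypothesis $(3)$ upgrades this to $p(x) \mid f(x)$ in $A$, completing the argument. The crux of the whole lemma is thus the denominator-clearing manipulation in $(1) \Rightarrow (2)$, where one must confirm that dividing by the scalar $b$ returns an element whose two lowest coefficients remain in $R$; the other implications are formal consequences of $F[x]$ being a UFD together with the definition of $A$.
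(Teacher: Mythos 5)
Your overall architecture --- the cycle $(1) \Rightarrow (2) \Rightarrow (3) \Rightarrow (1)$ --- is the same as the paper's, and your arguments for $(2) \Rightarrow (3)$ and $(3) \Rightarrow (1)$ are correct and essentially identical to the paper's. But your proof of $(1) \Rightarrow (2)$, which you rightly identify as the crux, has a genuine gap. First, the equation $b f(x) = p(x) q(x)$ with $q(x) = b f(x)/p(x)$ is false in general: nothing forces $p(x)$ to divide $b f(x)$ in $F[x]$ (take $f(x) = 1$), and your formulas are internally inconsistent, since $f(x) = q(x)/b$ forces $q(x) = b f(x)$, not $b f(x)/p(x)$. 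Second, and more seriously, the concluding step ``deduce $f(x) = q(x)/b \in A$ by checking the two low-degree coefficients lie in $R$'' cannot be carried out by bookkeeping: knowing $b f(x) \in A$ does \emph{not} imply $f(x) \in A$ (with $R = \mathbb{Z}$, take $f(x) = \tfrac{1}{2} + x$ and $b = 2$; dividing the low-order coefficients by $b$ is exactly where membership in $R$ can fail). A coefficient argument of this kind does work when one knows $p(0) \in U(R)$ --- that is precisely the paper's proof of Proposition \ref{p:ClassifyPrimesInSubringOfFX} --- but that fact is not a hypothesis here, and it is deduced later \emph{from} this lemma, so invoking it would be circular.

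The missing idea is how to apply primeness. You correctly observed that $p(x) \mid b \cdot \bigl(p(x) f(x)\bigr)$ holds in $A$ (the witness is $b f(x) \in A$), but you then regrouped the dividend as $\bigl(b f(x)\bigr) \cdot p(x)$, for which primeness only yields the useless disjunct $p(x) \mid p(x)$. Instead, keep the factorization $b \cdot \bigl(p(x)f(x)\bigr)$: both factors $b$ and $p(x)f(x)$ lie in $A$, so primeness of $p(x)$ in $A$ gives $p(x) \mid b$ in $A$ or $p(x) \mid p(x)f(x)$ in $A$. The first is impossible because $p(x)$ is non-constant and $b$ is a nonzero constant. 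The second disjunct is not trivial --- it asserts the existence of $h(x) \in A$ with $p(x)h(x) = p(x)f(x)$, and cancellation in the integral domain $F[x]$ forces $h(x) = f(x)$, i.e.\ $f(x) \in A$. In other words, ``$p(x) \mid p(x)f(x)$ in $A$'' is literally equivalent to ``$f(x) \in A$,'' so the useful disjunct delivers the conclusion immediately. This degree-plus-cancellation maneuver is the paper's argument for $(1) \Rightarrow (2)$, and it is the one step your sketch circles around but never lands on.
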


\begin{proof}
$(1) \rightarrow (2)$:  Suppose first that $p(x)$ is prime in $A$.  We know that no constants are prime in $A$ from above, so $p(x)$ is non-constant.  Let $f(x) \in F[x]$ be such that $p(x)f(x) \in A$.  We prove that $f(x) \in A$.  Write $f(x)  = q_0 + q_1x + \dots + q_nx^n$ where each $q_i \in F$.  Let $d$ be the product of the denominators of $q_0$ and $q_1$.  Now $d \in R \subseteq A$ and $d \cdot f(x) \in A$, hence $p(x) \mid p(x) \cdot d \cdot f(x)$ in $A$, i.e.~$p(x) \mid d \cdot (p(x)f(x))$ in $A$.  Since $p(x)$ is prime in $A$, either $p(x) \mid d$ in $A$ or $p(x) \mid p(x)f(x)$ in $A$.  The former is impossible because $p(x)$ is non-constant, so we must have that $p(x) \mid p(x)f(x)$ in $A$.  Fix $h(x) \in A$ with $p(x)h(x) = p(x)f(x)$.  Since $F[x]$ is an integral domain, we conclude that $f(x) = h(x) \in A$.

$(2) \rightarrow (3)$:  Immediate.

$(3) \rightarrow (1)$:  Let $g(x),h(x) \in A$ and suppose that $p(x) \mid g(x)h(x)$ in $A$.  Since $A$ is a subring of $F[x]$, we then have that $p(x) \mid g(x)h(x)$ in $F[x]$.  Now $p(x)$ is irreducible in $F[x]$, so since $F[x]$ is a UFD, we know that $p(x)$ is prime in $F[x]$.  Thus, either $p(x) \mid g(x)$ in $F[x]$ or $f(x) \mid h(x)$ in $F[x]$.  Using $(3)$, we conclude that either $p(x) \mid g(x)$ in $A$ or $p(x) \mid h(x)$ in $A$.  Therefore, $p(x)$ is prime in $A$.
\end{proof}

\begin{proposition} \label{p:ClassifyPrimesInSubringOfFX}
Let $R$ be an integral domain that is not a field, and let $F$ be its field of fractions.  Consider the subring $A = R + xR + x^2F[x]$ of $F[x]$.  An element $p(x) \in A$ is prime in $A$ if and only if $p(x)$ is irreducible in $F[x]$ and $p(0) \in U(R)$.
\end{proposition}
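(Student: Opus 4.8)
The plan is to handle the two implications separately, and in each case to invoke Lemma~\ref{l:CharacterizePrimesInA}, which lets me replace ``prime in $A$'' (for an element irreducible in $F[x]$) by the cleaner condition~(2): that $p(x)f(x) \in A$ implies $f(x) \in A$ for every $f(x) \in F[x]$. Throughout I write $p(x) = a_0 + a_1 x + a_2 x^2 + \cdots$, where $a_0 = p(0) \in R$ and $a_1 \in R$ since $p(x) \in A$.

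For the reverse implication, I would assume $p(x)$ is irreducible in $F[x]$ and $a_0 = p(0) \in U(R)$, and verify condition~(2) of Lemma~\ref{l:CharacterizePrimesInA}. Given $f(x) = q_0 + q_1 x + \cdots \in F[x]$ with $p(x)f(x) \in A$, membership in $A = R + xR + x^2F[x]$ only constrains the constant and linear coefficients, so it suffices to show $q_0, q_1 \in R$. The constant coefficient of $p(x)f(x)$ is $a_0 q_0 \in R$, and multiplying by $a_0^{-1} \in R$ gives $q_0 \in R$; then $a_1 q_0 \in R$, so the linear coefficient $a_0 q_1 + a_1 q_0 \in R$ yields $a_0 q_1 \in R$ and hence $q_1 = a_0^{-1}(a_0 q_1) \in R$. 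Thus $f(x) \in A$, and $p(x)$ is prime by the lemma.

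For the forward implication, I would assume $p(x)$ is prime in $A$. Lemma~\ref{l:PrimesInAAreNonconstantAndIrreducible} immediately gives that $p(x)$ is non-constant and irreducible in $F[x]$, so it remains to prove $a_0 \in U(R)$; I argue by contradiction, using condition~(2) of Lemma~\ref{l:CharacterizePrimesInA}, which now holds. If $a_0 = 0$, then $x \mid p(x)$ in $F[x]$, and irreducibility forces $p(x) = a_1 x$ with $a_1 \in R \setminus \{0\}$; choosing (via the hypothesis that $R$ is not a field) a nonzero nonunit $r \in R$, the element $f(x) = \frac{1}{r}x$ lies outside $A$ yet $p(x)f(x) = \frac{a_1}{r}x^2 \in x^2 F[x] \subseteq A$, contradicting~(2). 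If instead $a_0 \neq 0$ but $a_0 \notin U(R)$, I would use the witness $f(x) = \frac{1}{a_0} - \frac{a_1}{a_0^2}x$: a direct computation shows the constant and linear coefficients of $p(x)f(x)$ are $1$ and $0$ respectively, so $p(x)f(x) \in A$, whereas $f(x) \notin A$ because its constant term $\frac{1}{a_0} \notin R$. This again contradicts~(2), so $a_0 \in U(R)$.

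The main obstacle is producing the witness in the second subcase of the forward direction: the naive choice $f(x) = \frac{1}{a_0}$ fails because the linear coefficient $\frac{a_1}{a_0}$ of $\frac{1}{a_0}p(x)$ need not lie in $R$. The key is to add the correction term $-\frac{a_1}{a_0^2}x$, engineered precisely to cancel that linear coefficient while keeping the constant term of $f(x)$ equal to the non-$R$ element $\frac{1}{a_0}$. The non-field hypothesis on $R$ is used only to handle the degenerate case $p(0) = 0$, where no such correction is available.
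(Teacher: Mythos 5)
Your proof is correct, and its skeleton matches the paper's: both implications are funneled through condition (2) of Lemma \ref{l:CharacterizePrimesInA} together with Lemma \ref{l:PrimesInAAreNonconstantAndIrreducible}, and your reverse implication (extract $q_0 \in R$ from the constant coefficient $a_0q_0$, then $q_1 \in R$ from the linear coefficient $a_0q_1 + a_1q_0$, using $a_0 \in U(R)$ twice) is the paper's argument essentially verbatim. The one genuine divergence is the witness violating condition (2) in the forward direction when $p(0) \notin \{0\} \cup U(R)$: the paper multiplies by $\frac{1}{b}\cdot x$ with $b = p(0)$, a degree shift that pushes the problematic quotients into the $x^2$-and-higher slots where arbitrary elements of $F$ are allowed (the product has constant term $0$ and linear coefficient $b \cdot \frac{1}{b} = 1$), whereas you multiply by the truncated inverse $\frac{1}{a_0} - \frac{a_1}{a_0^2}\cdot x$, engineered so the product has constant term $1$ and linear coefficient $0$. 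Both witnesses are valid; the paper's has the advantage of uniformity, since the same $\frac{1}{b}\cdot x$ with $b$ any nonzero nonunit also disposes of the case $p(0) = 0$ directly --- no appeal to irreducibility to write $p(x) = a_1x$, as in your detour, is needed --- while yours is arguably the more natural thing to try (an inverse of $p(x)$ modulo $x^2$) and makes transparent exactly where invertibility of $p(0)$ in $R$ is the obstruction. A minor point in your favor: by establishing irreducibility in $F[x]$ \emph{before} invoking Lemma \ref{l:CharacterizePrimesInA}, you respect that lemma's stated hypothesis, whereas the paper applies it to conclude non-primality before irreducibility is known (harmless, since the direction (1)$\rightarrow$(2) never uses that hypothesis, but your ordering is cleaner).
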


\begin{proof}
We first prove that if $p(x) \in A$ does not satisfy $p(0) \notin U(R)$, then $p(x)$ is not prime in $A$.  If $p(0) = 0$, then fixing any nonzero nonunit $b \in R$ (which exists because $R$ is not a field), we have $p(x) \cdot \frac{x}{b} \in A$ but $\frac{x}{b} \notin A$, so $p(x)$ is not prime in $A$ by Lemma \ref{l:CharacterizePrimesInA}.  Suppose then that $p(0) \notin \{0\} \cup U(R)$.  Write $p(x) = q_nx^n + \dots + q_2x^2 + ax + b$ where $a,b \in R$ and $b \notin \{0\} \cup U(R)$.  We have
\begin{align*}
p(x) \cdot \left(\frac{1}{b} \cdot x\right) & = (q_nx^n + \dots + q_2x^2 + ax + b) \cdot \left(\frac{1}{b} \cdot x\right) \\
& = \left(\frac{q_n}{b}\right) \cdot x^{n+1} + \dots + \left(\frac{q_2}{b}\right) \cdot x^3 + \left(\frac{a}{b}\right) \cdot x^2+x
\end{align*}
Thus, $f(x) \cdot \frac{1}{b} \cdot x \in A$ but $\frac{1}{b} \cdot x \notin A$, so $f(x)$ is not prime in $A$ by Lemma \ref{l:CharacterizePrimesInA}.

We have just shown that $p(x) \in A$ is prime in $A$, then $p(0) \in U(R)$.  We also know that if $p(x) \in A$ is prime in $A$, then $p(x)$ is irreducible in $F[x]$ by Lemma \ref{l:PrimesInAAreNonconstantAndIrreducible}.

Suppose conversely that $p(x)$ is irreducible in $F[x]$ and that $p(0) \in U(R)$.  Using Lemma \ref{l:CharacterizePrimesInA}, to show that $p(x)$ is prime in $A$ it suffices to show that whenever $f(x) \in F[x]$ is such that $p(x)f(x) \in A$, then we must have $f(x) \in A$.  Suppose then that $f(x) \in F[x]$ and $p(x)f(x) \in A$.  Write
\begin{align*}
f(x) & = q_0 + q_1x + q_2x^2 + \dots + q_nx^n \\
p(x) & = a_0 + a_1x + r_2x^2 + \dots + r_nx^n
\end{align*}
where $a_0 \in U(R)$, $a_1 \in R$, each $q_i \in F$, and each $r_i \in F$.  We then have that $p(x)f(x) \in F[x]$ with
\[
p(x)f(x) = q_0a_0 + (q_0a_1 + a_0q_1) x + \dots
\]
As $p(x)f(x) \in A$, we know that $q_0a_0 \in R$ and $q_0a_1 + a_0q_1 \in R$.  Since $q_0a_0 \in R$ and $a_0 \in U(R)$, it follows that $q_0 \in R$.  Using this together with the facts that $a_1 \in R$ and $q_0a_1 + a_0q_1 \in R$, it follows that $a_0q_1 \in R$.  Applying again the fact that $a_0 \in U(R)$, we conclude that $q_1 \in R$.  Since $q_0,q_1 \in R$, it follows that $p(x) \in A$.
\end{proof}

With these results in hand, we now proceed to construct an integral domain $R$ with a complicated set of irreducible elements.  We will want our $R$ to have a ``nice" field of fractions $F$ in the sense that the irreducibles of $F[y]$ will be computable.

\begin{lemma} \label{l:DestorySigma1PrimesLemma}
Let $S$ be a $\Sigma_1^0$ set, and let $p_0,p_1,p_2,\dots$ list the usual primes from $\mathbb{N}$ in increasing order.  There exists a computable UFD $R$ such that:
\begin{itemize}
\item $\mathbb{Z}$ is a subring of $R$, and in fact
\[
\mathbb{Z}[x_1,x_2,\dots] \subseteq R \subseteq \mathbb{Q}(x_1,x_2,\dots),
\]
where there are infinitely many indeterminates if $S$ is infinite, and exactly $n$ of them if $|S| = n$.
\item $U(R) = \{1,-1\}$.
\item $p_i$ is irreducible in $R$ if and only if $i \notin S$.
\end{itemize}
\end{lemma}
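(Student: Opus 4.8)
The plan is to mimic the staged construction in the proof of Theorem~\ref{t:Pi1ControlOfPrimes}, but now destroying \emph{irreducibility} rather than primality. If $S=\emptyset$, take $R=\mathbb{Z}$. Otherwise, using Proposition~\ref{p:Sigma1IffRangeComputable}, fix a computable injection $\alpha$ with $\mathrm{range}(\alpha)=S$ (defined on $\{0,\dots,n-1\}$ if $|S|=n$ and on all of $\mathbb{N}$ if $S$ is infinite). Set $R_0=\mathbb{Z}$ and, working inside $\mathbb{Q}(x_1,x_2,\dots)$, build $R_k=R_{k-1}[x_k,\,p_{\alpha(k-1)}/x_k]$ for each $k\ge 1$; writing $y_k=p_{\alpha(k-1)}/x_k$, this forces the factorization $p_{\alpha(k-1)}=x_k y_k$. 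Finally let $R=\bigcup_k R_k$, a domain as a subring of a field. Since $x_k$ is transcendental over $\mathrm{Frac}(R_{k-1})=\mathbb{Q}(x_1,\dots,x_{k-1})$, we get $\mathrm{Frac}(R_k)=\mathbb{Q}(x_1,\dots,x_k)$ and the required containments $\mathbb{Z}[x_1,x_2,\dots]\subseteq R\subseteq\mathbb{Q}(x_1,x_2,\dots)$.

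The technical core is a single-step lemma: \emph{if $R$ is a Noetherian UFD and $q\in R$ is a prime element, then $R'=R[x,q/x]\cong R[u,v]/(uv-q)$ is a Noetherian UFD with $U(R')=U(R)$, in which $q=x\cdot(q/x)$ is reducible, $x$ and $q/x$ are new prime elements, and every prime of $R$ not associate to $q$ remains prime.} Noetherianity is Hilbert's basis theorem. For the UFD claim I would invoke Nagata's criterion: $x$ is a prime element of $R'$ because $R'/(x)\cong (R/qR)[v]$ is a domain (as $q$ is prime in $R$), and $R'[1/x]=R[x,x^{-1}]$ is a UFD (a localization of the UFD $R[x]$); hence $R'$ is a UFD. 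A grading by $x$-degree shows every unit is homogeneous of degree $0$, since a homogeneous unit $cx^m$ with $m\neq0$ would pair with an inverse forcing a unit of $R$ into $q^{|m|}R$, which is impossible; thus $U(R')=U(R)$. That $q$ becomes reducible is immediate, and a prime $\pi\in R$ not associate to $q$ stays prime because $R'/(\pi)\cong (R/\pi R)[u,v]/(uv-\bar q)\cong (R/\pi R)[u,\bar q/u]$ is a domain (here $\bar q\neq0$ in the domain $R/\pi R$, as $q\notin\pi R$). Applying this with $R=R_{k-1}$ and $q=p_{\alpha(k-1)}$ is legitimate: because $\alpha$ is injective, $\alpha(k-1)\notin\{\alpha(0),\dots,\alpha(k-2)\}$, so by induction $p_{\alpha(k-1)}$ is still prime (not yet destroyed) in $R_{k-1}$. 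Thus each $R_k$ is a Noetherian UFD with $U(R_k)=\{1,-1\}$.

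Passing to the limit is where care is needed, since an infinite union of UFDs need not be a UFD and $R$ itself is not Noetherian. Here I would use the standard fact that a domain is a UFD if and only if every nonzero nonunit is a product of prime elements. The ``permanent primes'' are the $x_k$, the $y_k=p_{\alpha(k-1)}/x_k$, and the integer primes $p_i$ with $i\notin S$; each appears in some $R_k$ and, by the ``other primes survive'' clause of the single-step lemma, is never subsequently destroyed, so each is prime in $R$ (any divisibility witness lives in some $R_k$). To factor an arbitrary nonzero nonunit $a\in R$, pick $k$ with $a\in R_k$ and factor $a$ into primes of the UFD $R_k$; by the single-step lemma the only primes of $R_k$ that fail to stay prime through all later stages are exactly the associates of some $p_{\alpha(m)}$ with $m\ge k$, and each such factor equals $x_{m+1}y_{m+1}$ in $R$. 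Refining these yields a factorization of $a$ into permanent primes, so $R$ is a UFD. Stability of units across the union gives $U(R)=\{1,-1\}$, and by construction $p_i$ is reducible (namely $x_{m+1}y_{m+1}$) exactly when $i=\alpha(m)\in S$ and is prime, hence irreducible, otherwise.

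Finally, computability is handled exactly as in Theorem~\ref{t:Pi1ControlOfPrimes}: realize $\mathbb{Q}(x_1,x_2,\dots)$ as a computable field and build the $R_k$ uniformly from $\alpha$. Membership in $R$ is decidable because an element mentioning only $x_1,\dots,x_N$ lies in $R$ iff it lies in $R_N$ (adjoining the transcendental $x_k$ for $k>N$ contributes nothing inside $\mathbb{Q}(x_1,\dots,x_N)$, so $R_k\cap\mathbb{Q}(x_1,\dots,x_N)=R_N$), and membership in the explicitly described ring $R_N$ is decidable; the ring operations are the restricted field operations. I expect the main obstacle to be the two UFD arguments --- establishing the single-step lemma via Nagata's criterion, and then the bookkeeping that pins down precisely which stage-primes survive into the limit --- rather than the routine computability and containment checks.
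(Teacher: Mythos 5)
Your proposal is correct and follows essentially the same route as the paper: the identical staged construction $R_0=\mathbb{Z}$, $R_{k+1}=R_k[x_{k+1},\,p_{\alpha(k)}/x_{k+1}]$ inside $\mathbb{Q}(x_1,x_2,\dots)$ driven by a computable enumeration $\alpha$ of $S$, with the union taken at the limit when $S$ is infinite. The only difference is one of packaging: where the paper cites \cite{JoeDamir} (Proposition 3.3, Theorem 3.10, and Section 4 there) for the single-step properties of $R[x,q/x]\cong R[u,v]/(uv-q)$, the preservation of units and of the surviving primes, and the limit argument, you prove these facts directly --- via Nagata's criterion for the single step, the prime-factorization (Kaplansky) criterion for the union, and the Laurent-polynomial description of $R_N$ with $R_k\cap\mathbb{Q}(x_1,\dots,x_N)=R_N$ for decidability --- which is in the spirit of how the cited results are themselves established.
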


\begin{proof}
If $S = \emptyset$, this is trivial by letting $A = \mathbb{Z}$.  Assume then that $S \neq \emptyset$.  If $S$ is finite, say $|S| = n$, then we can trivially fix a computable injective function $\alpha \colon \{1,,2\dots,n\} \to \mathbb{N}$ with $\text{range}(\alpha) = S$.  If $S$ is infinite, then we can fix a computable injective function $\alpha \colon \mathbb{N} \to \mathbb{N}$ with $\text{range}(\alpha) = S$ by Proposition \ref{p:Sigma1IffRangeComputable}.

We build our computable UFD $R$ in stages, starting by letting $R_0 = \mathbb{Z}$.  Suppose that we are at a stage $k$ and have constructed through the integral domain $R_k$.  We now destroy the irreducibility of $p_{\alpha(k)}$ by letting $R_{k+1} = R_k[x_k,\frac{p_{\alpha(k)}}{x_k}]$ as in \cite[Section 3]{JoeDamir}.  We continue this process through the construction of $R_{n+1}$ if $|S| = n$, and infinitely often if $S$ is infinite.  Using \cite[Proposition 3.3 and Theorem 3.10]{JoeDamir}, the following properties hold by induction on $k$:
\begin{itemize}
\item $R_k$ is a Noetherian UFD.
\item $\mathbb{Z}[x_1,x_2,\dots,x_k] \subseteq R_k \subseteq \mathbb{Q}(x_1,x_2,\dots,x_k)$.
\item $U(R_k) = \{1,-1\}$.
\item $p_i$ is irreducible in $R_k$ if and only if $i \notin \{\alpha(1),\alpha(2),\dots,\alpha(k)\}$.
\end{itemize}
Now if $S$ is finite, say $|S| = n$, then it follows that the integral domain $R_n$ has the required properties.

Suppose then that $S$ is infinite, and let $R = R_{\infty} = \bigcup_{k=0}^{\infty} R_k$.   We then have that $R$ has the required properties by the proofs in \cite[Section 4]{JoeDamir} (although they are significantly easier in this case because we never change the units).
\end{proof}

\begin{theorem} \label{t:Pi1ControlOfIrreducibles}
Let $S$ be a $\Sigma_1^0$ set, and let $p_0,p_1,p_2,\dots$ list the usual primes from $\mathbb{N}$ in increasing order.  There exists a computable integral domain $A$ such that:
\begin{itemize}
\item $\mathbb{Z}$ is a subring of $A$.
\item $U(A) = \{1,-1\}$.
\item No $p_i$ is prime in $A$.
\item The set of prime elements of $A$ is computable.
\item $p_i$ is irreducible in $A$ if and only if $i \notin S$.
\end{itemize}
\end{theorem}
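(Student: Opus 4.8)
The plan is to feed the computable UFD $R$ from Lemma~\ref{l:DestorySigma1PrimesLemma} into the subring construction analyzed in Proposition~\ref{p:ClassifyPrimesInSubringOfFX}. Concretely, apply Lemma~\ref{l:DestorySigma1PrimesLemma} to the given $S$ to obtain a computable UFD $R$ with $\mathbb{Z} \subseteq R$, $U(R) = \{1,-1\}$, and $p_i$ irreducible in $R$ if and only if $i \notin S$; its field of fractions is the standard computable field $F = \mathbb{Q}(x_1,x_2,\dots)$, and the construction presents $R$ as a computable subring of $F$ with computable inclusion. I would then take a fresh indeterminate $y$ and set $A = R + yR + y^2F[y]$, the subring of $F[y]$ consisting of those polynomials whose constant term and coefficient of $y$ lie in $R$. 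Since $2 \in R$ is a nonzero nonunit, $R$ is not a field, so Proposition~\ref{p:ClassifyPrimesInSubringOfFX} applies to $A$.

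First I would verify that $A$ is a computable integral domain, since it is a subring of the integral domain $F[y]$ and the only point requiring care is the coding. I would code an element of $A$ by storing its constant term and its coefficient of $y$ as elements of $R$ (via the computable presentation of $R$) and its remaining coefficients as elements of $F$; this gives a unique code to each element and a decidable set of codes. Addition is coordinatewise and obviously computable. For multiplication, the constant term and the coefficient of $y$ of a product are sums of products of stored $R$-coefficients and so are computed directly in $R$, while the higher coefficients are computed in $F$ after pushing the relevant $R$-coefficients into $F$ along the computable inclusion. Hence $A$ is a computable integral domain containing $\mathbb{Z}$.

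Next I would read off the algebraic properties. Any unit of $A \subseteq F[y]$ has degree $0$, hence lies in $R$, so $U(A) = U(R) = \{1,-1\}$. Each $p_i$ is a nonzero nonunit constant with $p_i(0) = p_i \notin U(R)$, so by the first half of Proposition~\ref{p:ClassifyPrimesInSubringOfFX} no $p_i$ is prime in $A$. For irreducibility, if $p_i = f(y)g(y)$ in $A$, then additivity of degree forces $f$ and $g$ to be constants, hence to lie in $R$; thus the factorizations of $p_i$ in $A$ are exactly its factorizations in $R$, and since $U(A) = U(R)$ it follows that $p_i$ is irreducible in $A$ if and only if it is irreducible in $R$, i.e.\ if and only if $i \notin S$.

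It remains to show that the set of primes of $A$ is computable, and this is where I expect the real work to lie. By Proposition~\ref{p:ClassifyPrimesInSubringOfFX}, an element $p(y) \in A$ is prime if and only if $p(y)$ is irreducible in $F[y]$ and $p(0) \in U(R) = \{1,-1\}$. The condition on $p(0)$ is trivially decidable, so primality testing reduces to deciding irreducibility in $F[y]$. This is the main obstacle. Given $p(y) \in F[y]$, only finitely many indeterminates $x_1,\dots,x_m$ occur in its coefficients, so $p(y) \in \mathbb{Q}(x_1,\dots,x_m)[y]$; clearing denominators and invoking Gauss's Lemma reduces the question to factoring in the polynomial ring $\mathbb{Q}[x_1,\dots,x_m,y]$, which is computable by classical multivariate factorization over $\mathbb{Q}$ (for instance via Kronecker's method). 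I expect the delicate part to be making this reduction uniform and effective — detecting which finite set of indeterminates occurs and carrying out the translation to the standard multivariate polynomial ring — rather than any new algebraic idea, since the decidability of factorization over $\mathbb{Q}$ is classical.
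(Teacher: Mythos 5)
Your proposal follows the paper's proof essentially verbatim: apply Lemma \ref{l:DestorySigma1PrimesLemma} to get $R$, form $A = R + yR + y^2F[y]$ with $F = \mathbb{Q}(x_1,x_2,\dots)$, read off the units and irreducibility of the $p_i$ by degree considerations, and use Proposition \ref{p:ClassifyPrimesInSubringOfFX} to reduce primality in $A$ to irreducibility in $F[y]$ together with the unit condition on the constant term. The only divergence is that where the paper simply cites Fr\"ohlich--Shepherdson for the computability of the irreducibles of $F[y]$, you sketch the underlying classical argument (restrict to the finitely many indeterminates that occur, clear denominators, Gauss's Lemma, Kronecker factorization over $\mathbb{Q}$); this is fine, though for correctness of that sketch you should also note the standard fact that irreducibility over $\mathbb{Q}(x_1,\dots,x_m)$ is preserved in passing to the purely transcendental extension $F$, since a priori a factorization in $F[y]$ could involve extra indeterminates.
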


\begin{proof}
Let $R$ be the integral domain given by Lemma \ref{l:DestorySigma1PrimesLemma}.  Let $F$ be the field of fractions of $R$.  Since
\[
\mathbb{Z}[x_1,x_2,\dots] \subseteq R \subseteq \mathbb{Q}(x_1,x_2,\dots)
\]
(where there are infinitely many indeterminates if $S$ is infinite, and exactly $n$ of them if $|S| = n$) and the field of fractions of $\mathbb{Z}[x_1,x_2,\dots]$ is $\mathbb{Q}(x_1,x_2,\dots)$, it follows that $F = \mathbb{Q}(x_1,x_2,\dots)$.  Let $A$ be the subring $R + yR + y^2F[y]$ of $F[y]$.  Now we clearly have that $\mathbb{Z}$ is a subring of $A$ and $U(A) = \{1,-1\}$.  Also, each $p_i$ is a constant polynomial in $A$, so is not prime in $A$ by Lemma \ref{l:PrimesInAAreNonconstantAndIrreducible}.  By \cite[Theorem 4.5]{FrohlichShep}, the set of irreducible elements of $F[y]$ is computable, so since $U(R) = \{1,-1\}$, we may use Proposition \ref{p:ClassifyPrimesInSubringOfFX} to conclude that the set of prime elements of $A$ is computable.

Finally, by Lemma \ref{l:DestorySigma1PrimesLemma}, we have that $p_i$ is irreducible in $R$ if and only if $i \notin S$.  Now $R$ is the subring of $A$ consisting of the constant polynomials, so as $U(A) = U(R)$ and divisors of the constant polynomials in $A$ must be constants, it follows that $p_i$ is irreducible in $A$ if and only $p_i$ is irreducible in $R$, which is if and only if $i \notin S$.
\end{proof}

\begin{corollary}
There exists a computable integral domain $A$ such that the set of prime elements of $A$ is computable but the set of irreducible elements of $A$ is not computable.
\end{corollary}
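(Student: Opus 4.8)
The plan is to obtain this corollary as an immediate application of Theorem \ref{t:Pi1ControlOfIrreducibles}, exactly mirroring how the earlier corollary in Section 3 followed from Theorem \ref{t:Pi1ControlOfPrimes}. First I would fix a noncomputable $\Sigma_1^0$ set $S$; such a set exists, for instance the halting set (the set of natural numbers coding programs that halt), as noted following the definition of $\Sigma_1^0$ sets. Feeding this $S$ into Theorem \ref{t:Pi1ControlOfIrreducibles} produces a computable integral domain $A$ that contains $\mathbb{Z}$ as a subring, has $U(A) = \{1,-1\}$, has a computable set of prime elements, and satisfies the coding condition that $p_i$ is irreducible in $A$ if and only if $i \notin S$.

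With $A$ in hand, the first required property is free: the set of prime elements of $A$ is computable by the conclusion of the theorem. The remaining task is to argue that the set of irreducible elements of $A$ is not computable. Here I would run the standard reduction: the map $i \mapsto p_i$ sending an index to the $i$-th integer prime is computable, and since $\mathbb{Z} \subseteq A$ and $A$ is a computable ring, each $p_i$ is (the code of) an explicit element of $A$. Therefore, if the set of irreducible elements of $A$ were computable, we could decide membership in $S$ by computing $p_i$ and testing it for irreducibility, using the equivalence that $p_i$ is irreducible if and only if $i \notin S$. This would make $S$ computable, contradicting our choice of $S$.

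There is essentially no obstacle to overcome: the entire content of the statement has been loaded into Theorem \ref{t:Pi1ControlOfIrreducibles}, and the corollary is a one-step consequence. The only point requiring any care is the direction of the reduction. Because the theorem codes $S$ into the \emph{complement} of the irreducible primes (namely $p_i$ is irreducible exactly when $i \notin S$), a decision procedure for irreducibility of the $p_i$ would decide the complement of $S$, hence $S$ itself. Since the complement of a noncomputable set is noncomputable, this delivers exactly the contradiction we want.
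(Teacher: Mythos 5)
Your proposal is correct and follows exactly the paper's own argument: fix a noncomputable $\Sigma_1^0$ set $S$, apply Theorem \ref{t:Pi1ControlOfIrreducibles}, read off computability of the primes from the theorem, and derive a contradiction from the coding $p_i$ irreducible $\Leftrightarrow$ $i \notin S$ if the irreducibles were computable. Your version merely makes explicit the reduction (the computable map $i \mapsto p_i$ and the passage through the complement of $S$) that the paper leaves implicit.
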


\begin{proof}
Fix a noncomputable $\Sigma_1^0$ set $S$, and let $A$ be the integral domain give by Theorem \ref{t:Pi1ControlOfIrreducibles}.  We then have the set of prime elements of $A$ is computable.  However, the set of irreducible elements of $A$ is not computable, because if we could compute it, then we could compute $S$, which is a contradiction.
\end{proof}

\end{document}